\definecolor{azure(colorwheel)}{rgb}{0.0, 0.5, 1.0}
\definecolor{Pink}{RGB}{255, 105, 180}
\newenvironment{customthm}[1]
  {\innercustomthm}
  {\endinnercustomthm}
\newtheorem*{thm*}{Theorem}
\newtheorem{thm}{Theorem}
\newtheorem{lem}[thm]{Lemma}
\newtheorem{pro}[thm]{Proposition}
\newtheorem{obs}[thm]{Observation}
\newtheorem{ques}[thm]{Question}
\crefname{thm}{Theorem}{Theorems}
\crefname{lem}{Lemma}{Lemmas}
\newcommand{\N}{\mathbb{N}}
\newcommand{\R}{\mathbb{R}}
\begin{document}

\title{An Ohba-like Result for Flexible List Coloring}

\author{ 
Michael C. Bowdoin \thanks{Mitchell College of Business, University of South Alabama, Mobile, AL, USA (mb2139@jagmail.southalabama.edu)}
\and
Yanghong Chi  \thanks{Alabama School of Math and Science, Mobile, AL, USA (chiyanghong12138@gmail.com)}
\and 
Christian B. Ellington \thanks{School of Computing, University of South Alabama, Mobile, AL, USA (cbe2222@jagmail.southalabama.edu)}
\and
Bella Ives  \thanks{Alabama School of Math and Science, Mobile, AL, USA (Logan.Ives@asms.net)}
\and 
Seoju Lee   \thanks{Alabama School of Math and Science, Mobile, AL, USA (Seojuu.lee@gmail.com)}
\and
Fennec Morrissette \thanks{School of Computing, University of South Alabama, Mobile, AL, USA (ajm2228@jagmail.southalabama.edu )}
\and
Jeffrey A. Mudrock \thanks{Department of Mathematics and Statistics, University of South Alabama, Mobile, AL, USA (mudrock@southalabama.edu)}
}

\maketitle

\begin{abstract}
Chromatic-choosablility is a notion of fundamental importance in list coloring.  A graph $G$ is \emph{chromatic-choosable} when its chromatic number, $\chi(G)$, is equal to its list chromatic number $\chi_{\ell}(G)$.  Flexible list coloring was introduced by Dvo\v{r}\'{a}k, Norin, and Postle in 2019 in order to address a situation in list coloring where we still seek a proper list coloring, but each vertex may have a preferred color assigned to it, and for those vertices we wish to color as many of them with their preferred colors as possible.  In flexible list coloring, the \emph{list flexibility number} of $G$, denoted $\chi_{\ell flex}(G)$, serves as the natural analogue of $\chi_{\ell}(G)$.  In 2002, Ohba famously showed that for any graph $G$, there exists an $N \in \N$ such that $\chi(K_p \vee G) = \chi_{\ell}(K_p \vee G)$ whenever $p \geq N$.  Since $\chi(G) \leq \chi_{\ell}(G) \leq \chi_{\ell flex}(G)$, it is natural to ask whether this result holds if $\chi_{\ell}$ is replaced with $\chi_{\ell flex}$.  In this paper we not only show that this result doesn't hold in general if $\chi_{\ell}$ is replaced with $\chi_{\ell flex}$, but we also give a characterization of the graphs for which it does hold. 
 
\medskip

\noindent {\bf Keywords.} list coloring, chromatic-choosability, flexible list coloring, list flexibility number

\noindent \textbf{Mathematics Subject Classification.} 05C15

\end{abstract}

\section{Introduction}\label{intro}
In this paper, all graphs are nonempty, finite, simple graphs.  Generally speaking, we follow West~\cite{W01} for terminology and notation.  The set of natural numbers is $\N = \{1,2,3, \ldots \}$.  For $m \in \N$, we write $[m]$ for the set $\{1, \ldots, m \}$.  We also take $[0]$ to be the empty set. For a graph $G$, $V(G)$ and $E(G)$ denote the vertex set and the edge set of $G$ respectively. We use $\alpha(G)$ and $\omega(G)$ for the independence number of $G$ and clique number of $G$ respectively.  When $u$ and $v$ are adjacent in $G$ we write $uv$ or $vu$ for the edge with endpoints $u$ and $v$.  By $H \subseteq G$, we mean that $H$ is a subgraph of $G$. If $S \subseteq V(G)$, $G[S]$ denotes the subgraph of $G$ induced by $S$, and $G-S$ is the graph $G[V(G)-S]$.  For any $v \in V(G)$, we let $G-v$ be the graph $G[V(G)-\{v\}]$. For any $E \subseteq E(G)$, we let $G-E$ be the subgraph of $G$ with vertex set $V(G)$ and edge set $E(G)-E$.  Also, when $G$ and $H$ are vertex disjoint graphs we write $G \vee H$ for the join of $G$ and $H$, and when $G= K_1$ we say that $G \vee H$ is the \emph{cone} of $H$.

Our motivation for this paper was to extend results on chromatic-choosability to the context of flexible list coloring which is a recently introduced variation on list coloring.  So, we begin by reviewing list coloring and chromatic-choosability.

\subsection{List Coloring and Chromatic-Choosability} \label{basic}

For classical vertex coloring of graphs, we wish to color the vertices of a graph $G$ with up to $m$ colors from $[m]$ so that adjacent vertices receive different colors, a so-called \emph{proper $m$-coloring}.  The \emph{chromatic number} of a graph $G$, denoted $\chi(G)$, is the smallest $m$ such that $G$ has a proper $m$-coloring.  List coloring is a well-known variation on classical vertex coloring that was introduced independently by Vizing~\cite{V76} and Erd\H{o}s, Rubin, and Taylor~\cite{ET79} in the 1970s.  For list coloring, we associate a \emph{list assignment} $L$ with a graph $G$ such that each vertex $v \in V(G)$ is assigned a list of available colors $L(v)$ ($L$ is said to be a list assignment for $G$).  We say $G$ is \emph{$L$-colorable} if there is a proper coloring $f$ of $G$ such that $f(v) \in L(v)$ for each $v \in V(G)$ (we refer to $f$ as a \emph{proper $L$-coloring} of $G$).  A list assignment $L$ for $G$ is called a \emph{$k$-assignment} if $|L(v)|=k$ for each $v \in V(G)$.  Graph $G$ is said to be \emph{$k$-choosable} if it is $L$-colorable whenever $L$ is a $k$-assignment for $G$.  The \emph{list chromatic number} of a graph $G$, denoted $\chi_\ell(G)$, is the smallest $m$ for which $G$ is $k$-choosable.  It is easy to show that for any graph $G$, $\chi(G) \leq \chi_\ell(G)$.   

A graph $G$ is called \emph{chromatic-choosable} if $\chi(G) = \chi_{\ell}(G)$~\cite{O02}.   Determining whether a graph is chromatic-choosable is, in general, a challenging problem.  Perhaps the most famous conjecture involving list coloring is about chromatic-choosability.  Indeed, the Edge List Coloring Conjecture states that every line graph of a loopless multigraph is chromatic-choosable (see~\cite{HC92}).  In 2002 Ohba showed the following important result which served as the inspiration for the questions studied in this paper.

\begin{thm}[\cite{O02}] \label{thm: OhbaO} For any graph $G$, there exists an $N \in \N$ such that $K_p \vee G$ is chromatic-choosable whenever $p \geq N$.  
\end{thm}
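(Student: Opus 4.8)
The plan is to reduce the statement to Hall's theorem on systems of distinct representatives. Fix $G$, put $n = |V(G)|$ and $c = \chi(G)$, and let $N$ (depending only on $n$ and $c$ — a polynomial in $n-c$ will turn out to suffice) be pinned down at the end. Let $p \ge N$, set $H = K_p \vee G$, and note that $\chi(H) = p + c$, so it is enough to prove $H$ is $(p+c)$-choosable. Let $L$ be an arbitrary $(p+c)$-assignment for $H$ and write $U = V(K_p)$ and $W = V(G)$, so $|U| = p$ and $|W| = n$. Giving a proper $L$-coloring of $H$ is the same as giving a proper $L$-coloring $g$ of $G$, with color set $Y := \{g(w) : w \in W\}$ (so $|Y| \le n$), together with a system of distinct representatives for the lists $\bigl(L(u) \setminus Y\bigr)_{u \in U}$, since the colors on the clique $U$ must be pairwise distinct and must avoid every color used on $W$ (and conversely). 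Because $\chi_\ell(G) \le n \le p + c$, a proper $L$-coloring $g$ of $G$ certainly exists, so the whole difficulty lies in choosing $g$ for which the second (SDR) step can be carried out.

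Fix such a $g$, with color set $Y$. By Hall's theorem the SDR exists iff $\bigl|\bigcup_{u \in A}(L(u)\setminus Y)\bigr| \ge |A|$ for every $A \subseteq U$. Since $|L(u) \setminus Y| \ge (p+c) - n$, this holds automatically whenever $|A| \le p + c - n$. Writing $D_A := \bigcup_{u \in A} L(u)$, the Hall condition at $A$ says $|D_A| - |D_A \cap Y| \ge |A|$; as $|D_A \cap Y| \le |Y| \le n$, it can fail only for a set $A$ that is \emph{dangerous}, meaning $|A| > p + c - n$ and $|D_A| < |A| + n$. Dangerous sets are large: each omits fewer than $n - c$ vertices of $U$, so any two of them meet provided $p$ is large enough (e.g. $p \ge 2n$); picking $u$ in the intersection gives $L(u) \subseteq D_A \cap D_{A'}$, hence $|D_A \cap D_{A'}| \ge p + c$ and $|D_A \triangle D_{A'}| < 2(n-c)$. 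In other words, all the ``dangerous color sets'' $D_A$ lie within a band of bounded width around any fixed one of them.

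The remaining task is to choose $g$ so that the Hall condition also holds at every dangerous $A$, i.e. so that $|D_A \cap Y| \le |D_A| - |A|$ for each dangerous $A$; since $|D_A| - |A| \ge c$ there, it is in fact enough that $Y$ meet every dangerous color set in at most $c$ colors. This is exactly where $p$ being large enters: the lists on $W$ have size $p + c$ while $|W| = n$ is fixed, giving ample room to steer the coloring $g$ of $G$ away from any bounded family of ``bad'' colors. The main obstacle I expect is that one cannot in general color $G$ with only $\chi(G)$ colors even from enormous lists (for instance $K_{t,t}$ with pairwise almost-disjoint lists needs three colors), so one cannot simply force $|Y| \le c$; instead one must track how many colors of the dangerous band $Y$ is permitted to hit, control the number of distinct dangerous bands, and then verify the Hall condition at all dangerous $A$ simultaneously, and it is this book-keeping that determines how large $N$ must be in terms of $n - c$. (Alternatively, the theorem follows from any sufficient condition for chromatic-choosability phrased through the vertex excess $|V| - \chi$, applied to $H = K_p \vee G$, whose number of vertices exceeds its chromatic number by the constant $n - \chi(G)$; e.g. a bound of the form ``$|V| \le \chi + \Omega(\sqrt{\chi})$ implies chromatic-choosability'' already yields $N = O((n-\chi(G))^2)$, and stronger such bounds push this down to linear in $n-\chi(G)$.)
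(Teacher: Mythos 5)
The paper does not actually prove this statement: it is quoted from Ohba's 2002 paper, and within the present paper it is logically subsumed by the Noel--Reed--Wu theorem (\cref{thm: Ohba}), from which it follows in one line, since $|V(K_p\vee G)| = p + |V(G)|$ and $\chi(K_p\vee G) = p + \chi(G)$, so the hypothesis $|V| \le 2\chi + 1$ holds as soon as $p \ge |V(G)| - 2\chi(G) - 1$. There is therefore no in-paper proof to compare against, and your proposal has to stand on its own.

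It does not: there is a genuine gap at exactly the point you flag yourself. The reduction to Hall's theorem is sound, and your estimates on dangerous sets ($|U\setminus A| < n-c$, $|D_A \triangle D_{A'}| < 2(n-c)$, and the observation that it suffices for $Y$ to meet each dangerous $D_A$ in at most $|D_A|-|A| \ge c$ colors) all check out. But the entire content of Ohba's theorem is the step you then describe only in the conditional mood: actually producing a proper $L$-coloring $g$ of $G$ whose color set $Y$ meets every dangerous band in at most $c$ colors. You correctly identify the obstruction (one cannot in general confine $G$ to $\chi(G)$ colors when the lists are large and nearly, but not exactly, aligned), yet you supply no mechanism for overcoming it --- no induction, no ordering or counting argument bounding the interaction of $Y$ with the dangerous core, and no explicit value of $N$. ``One must track \dots control \dots and then verify \dots and it is this book-keeping that determines how large $N$ must be'' is a description of the problem, not a solution to it. The parenthetical fallback --- invoking an unproven sufficient condition of the form ``$|V|\le \chi + \Omega(\sqrt{\chi})$ implies chromatic-choosability'' --- merely relocates the difficulty, since any such bound is itself a theorem at least as hard as the one being proved. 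As written, the proposal is a correct reformulation plus an accurate diagnosis of where the work lies, but the work is not done.
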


In 2015, Noel, Reed, and Wu~\cite{NR15} generalized this by famously proving the following.

\begin{thm} [\cite{NR15}] \label{thm: Ohba}
If G is a graph satisfying $|V(G)| \leq 2\chi(G) + 1$, then $G$ is chromatic-choosable.
\end{thm}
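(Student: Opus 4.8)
The plan is to follow the approach of Noel, Reed, and Wu: first reduce to complete multipartite graphs, and then prove a counting strengthening by induction. For the reduction, recall that $\chi_\ell$ is monotone under taking subgraphs, and that a graph $G$ with $\chi(G)=k$ is a spanning subgraph of the complete $k$-partite graph $H$ whose parts are the color classes of an optimal proper coloring of $G$; since $|V(H)|=|V(G)| \le 2k+1 = 2\chi(H)+1$ and $\chi(H)=k$, it is enough to show $\chi_\ell(H) \le \chi(H)$ for complete multipartite $H$. A further reduction lemma lets one assume every part of $H$ has size at most $3$; one notes in particular that in the tight case $|V(H)| = 2k+1$ at least one part has size $\ge 3$ (else the $k$ parts span only $\le 2k$ vertices), the extremal shapes being graphs such as $K_{3\star(c+1),\,1\star c}$ with $\chi = 2c+1$.

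The natural way to set up the induction is on $|V(G)|$, with basic move: pick a part $V_i$ and a color $c \in L(v)$ for some $v \in V_i$, color $v$ with $c$, delete $v$, delete $c$ from the list of every vertex outside $V_i$, and apply the inductive hypothesis to the reduced instance $(G-v, L')$. The difficulty is that the two conditions one wants to preserve pull against each other: if $|V_i| \ge 3$ then $\chi(G-v) = k$ and the vertex bound $|V(G-v)| \le 2\chi(G-v)+1$ is maintained, but every list outside $V_i$ can drop to size $k-1 < \chi(G-v)$; whereas if $|V_i| = 1$ then the list-size condition survives but $\chi(G-v) = k-1$ while $|V(G-v)|$ may be as large as $2k = 2\chi(G-v)+2$, one too many. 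Hence the bare induction does not close. The key idea is to prove instead a self-propagating \emph{counting} statement: for every complete multipartite graph $G$ with $|V(G)| \le 2\chi(G)+1$ and every list assignment $L$ with $|L(v)| \ge \chi(G)$ for all $v$, the number of proper $L$-colorings of $G$ is at least the number of proper $\chi(G)$-colorings of $G$, which equals $\chi(G)!$. This yields the theorem: for an arbitrary $G$ with $\chi(G)=k$ and an arbitrary $k$-assignment $L$, taking $H$ as above, $G$ has at least as many proper $L$-colorings as $H$ (every proper coloring of $H$ is one of $G$), hence at least $k! \ge 1$. The point is that the slack built into a counting bound is exactly what lets the induction tolerate a partial loss in list sizes.

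With the counting statement as the target, the inductive step becomes a case analysis according to how the colors are spread among the lists — roughly, a color lying in only a few of the lists can be committed cheaply (delete those few vertices and that color), while in the remaining cases one exploits the list structure on the parts of size $3$. In each branch one must verify the vertex-count bound, verify the list-size bound, and — the crux — check that the number of proper colorings of the reduced instance, scaled back up by the number of ways the removed vertices could have been colored, is still at least $\chi(G)!$; this last comparison typically reduces to a convexity or double-counting estimate. I expect this final bookkeeping, especially in the branches where a color is committed and list sizes recover only partially, to be the main obstacle — it is the technical heart of the argument of Noel, Reed, and Wu.
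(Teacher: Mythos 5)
This statement is not proved in the paper at all: it is the Noel--Reed--Wu theorem (the resolution of Ohba's conjecture), quoted verbatim from \cite{NR15} and used as a black box, so there is no internal proof to compare against. Judged on its own terms, your proposal is not a proof but an outline with its central step missing. The preliminary reductions you describe are fine and standard: $\chi_\ell$ is monotone under subgraphs, so one may pass to the complete $\chi(G)$-partite graph on the color classes, and the reduction to parts of size at most $3$ is a genuine lemma in the literature (though you do not prove it either). But you then explicitly defer the inductive step --- the case analysis on how colors are distributed among lists and the accompanying counting estimates --- calling it ``the technical heart of the argument of Noel, Reed, and Wu.'' That heart is the proof; a plan that stops at its doorstep has a genuine gap.

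Moreover, the route you propose is not in fact the one Noel, Reed, and Wu take, and the substitute target you set up is strictly harder than the theorem. Their argument does not proceed by counting proper $L$-colorings; it analyzes (near-)acceptable partial colorings and Hall-type matching conditions in an auxiliary bipartite graph between colors and parts. Your ``self-propagating counting statement'' --- that every complete multipartite $G$ with $|V(G)| \leq 2\chi(G)+1$ and every list assignment with lists of size at least $\chi(G)$ admits at least $\chi(G)!$ proper $L$-colorings --- is precisely the enumerative strengthening of Ohba's conjecture studied in the recent literature on the list color function. It is a much stronger assertion than chromatic-choosability, it requires its own substantial proof, and asserting it as the engine of the induction without any argument does not close the gap; it replaces one unproved theorem with a harder one. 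If you intend to cite the result, cite it as the paper does; if you intend to prove it, the inductive step and the verification of the counting inequality in each branch must actually be carried out.
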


\subsection{Flexible List Coloring} 

Flexible list coloring was introduced by Dvo\v{r}\'{a}k, Norin, and Postle~\cite{DN19} in 2019, and since its introduction, it has been studied by many authors (see e.g.,~\cite{BB24, BMS22, CCM22, KM23, LMZ22, TM19}).  Flexible list coloring addresses a situation in list coloring where we still seek a proper list coloring, but each vertex may have a preferred color assigned to it, and for those vertices we wish to color as many of them with their preferred colors as possible.  Specifically, suppose $G$ is a graph and $L$ is a list assignment for $G$.  A \emph{request of $L$} is a function $r$ with nonempty domain $D\subseteq V(G)$ such that $r(v) \in L(v)$ for each $v \in D$.  For each $v \in D$ we say that the pair $(v,r(v))$ is the \emph{vertex request for $v$ with respect to $r$} (we omit the phrase with respect to $r$ when $r$ is clear from context).  For any $\epsilon \in [0,1]$, the triple $(G,L,r)$ is \emph{$\epsilon$-satisfiable} if there exists a proper $L$-coloring $f$ of $G$ such that $f(v) = r(v)$ for at least $\epsilon|D|$ vertices in $D$.  Additionally, when $f(v) = r(v)$ for some $v \in D$, we say $f$ \emph{satisfies} the vertex request for $v$, and when $f(v) = r(v)$ for at least $\epsilon|D|$ vertices in $D$, we say $f$ \emph{satisfies at least $\epsilon|D|$ of the vertex requests of $r$}.

The pair $(G,L)$ is \emph{$\epsilon$-flexible} if $(G,L,r)$ is $\epsilon$-satisfiable whenever $r$ is a request of $L$.  We say that $G$ is \emph{$(k, \epsilon)$-flexible} if $(G,L)$ is $\epsilon$-flexible whenever $L$ is a $k$-assignment for $G$.  Note that if $G$ is $k$-choosable, then $G$ is $(k,0)$-flexible.

For a graph $G$, it is natural to ask: what is the largest $\epsilon$ so that $G$ is $(k,\epsilon)$-flexible for some $k \in \N$?  For a $k$-assignment $L$ for $G$ and request $r$ of $L$ with domain $D$, it is possible that $r(v)$ is the same color for all $v\in D$.  Then at most $\alpha(G[D])$ vertices in $D$ will have their request satisfied in a proper $L$-coloring.  So, $\epsilon$ must satisfy $\epsilon |D|\le \alpha(G[D])$ when $G$ is $(k,\epsilon)$-flexible.  

Thus, $\epsilon\le \min_{\emptyset\not=D\subseteq V(G)} \alpha(G[D])/|D|$ for any $(k,\epsilon)$-flexible graph $G$.  The \emph{Hall ratio} of a graph $G$, denoted $\rho(G)$, is given by 
$$\rho(G) = \max_{\emptyset\not= H \subseteq G} \frac{|V(H)|}{\alpha(H)}.$$  
The Hall ratio was first studied in 1997 by Hilton, Johnson Jr., and Leonard~\cite{HJ97} under the name fractional Hall-condition number.  It is shown in~\cite{HJ97} that for any graph $G$, $\omega(G) \leq \rho(G) \leq \chi_f(G)$ where $\chi_f(G)$ is the fractional chromatic number of $G$.  Note that among subgraphs $H$ of $G$ with fixed vertex set $D$, $\alpha(H)$ is minimized when $H=G[D]$.  Therefore, $\min_{\emptyset\not=D\subseteq V(G)} \alpha(G[D])/|D| = 1/\rho(G)$.  Moreover, this bound on feasible $\epsilon$ for $(k,\epsilon)$-flexibility is attainable.

\begin{pro} [\cite{KM23}] \label{pro: fundamental}
There exists a $k$ such that $G$ is $(k,\epsilon)$-flexible if and only if $\epsilon\le 1/\rho(G)$.
\end{pro}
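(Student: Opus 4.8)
The plan is to prove both directions of the biconditional in \Cref{pro: fundamental}.

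\textbf{The necessity direction.} First I would establish that if $G$ is $(k,\epsilon)$-flexible for some $k$, then $\epsilon \le 1/\rho(G)$. This is essentially already argued in the discussion preceding the proposition: take any nonempty $H \subseteq G$ with vertex set $D$, let $L$ be any $k$-assignment for $G$, and define a request $r$ with domain $D$ by setting $r(v) = c$ for all $v \in D$, where $c$ is a fixed color lying in $\bigcap_{v \in D} L(v)$. Such a color exists provided the lists on $D$ have a common element; to guarantee this I would simply use a $k$-assignment in which every list equals $[k]$ (the constant list assignment). In any proper $L$-coloring $f$, the set $\{v \in D : f(v) = c\}$ is independent in $G[D]$, hence has size at most $\alpha(G[D]) \le \alpha(H)$. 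So $f$ satisfies at most $\alpha(H)$ vertex requests, which forces $\epsilon|D| \le \alpha(H)$, i.e. $\epsilon \le \alpha(H)/|V(H)|$. Taking the infimum over all such $H$ gives $\epsilon \le 1/\rho(G)$.

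\textbf{The sufficiency direction.} This is the substantive part: given $\epsilon \le 1/\rho(G)$, I must exhibit a $k$ for which $G$ is $(k,\epsilon)$-flexible. The natural strategy is to take $k$ large relative to $|V(G)|$ — for instance $k \ge |V(G)|$, so that $G$ is trivially $k$-choosable (indeed $\chi_\ell(G) \le |V(G)|$, with room to spare) — and then, given any $k$-assignment $L$ and any request $r$ with domain $D$, greedily build a proper $L$-coloring that satisfies enough requests. The key observation is that $\lceil \epsilon |D| \rceil \le \lceil |D|/\rho(G) \rceil \le \alpha(G[D])$ by the definition of the Hall ratio applied to $H = G[D]$. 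So it suffices to find an independent set $I \subseteq D$ with $|I| = \alpha(G[D])$ on which the request assigns colors compatibly, and then extend. The cleanest approach: pick a maximum independent set $I$ in $G[D]$; we would like to color each $v \in I$ with $r(v)$, but the values $r(v)$ for $v \in I$ need not be distinct — however, since $I$ is independent, it is harmless if two vertices of $I$ receive the same color. Color every $v \in I$ with $r(v)$. This partial coloring is proper since $I$ is independent. Now extend to all of $V(G)$: each uncolored vertex $w$ has at most $|V(G)| - 1 < k$ already-colored neighbors (forbidding at most $|V(G)|-1$ colors), and $|L(w)| = k$, so a greedy coloring in any order completes a proper $L$-coloring $f$. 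Since $f$ satisfies every vertex request for $v \in I$, it satisfies at least $|I| = \alpha(G[D]) \ge \epsilon|D|$ of the requests of $r$. Hence $(G,L,r)$ is $\epsilon$-satisfiable for every request $r$, so $G$ is $(k,\epsilon)$-flexible.

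\textbf{Main obstacle.} I anticipate no deep obstacle — the heart of the argument is the identity $\min_{\emptyset \ne D \subseteq V(G)} \alpha(G[D])/|D| = 1/\rho(G)$, which the excerpt has already recorded. The one point requiring a little care is the rounding: the definition of $\epsilon$-satisfiability asks for $f(v) = r(v)$ on at least $\epsilon|D|$ vertices, and $\epsilon|D|$ need not be an integer, so I want $\alpha(G[D]) \ge \epsilon|D|$ as real numbers, which follows directly from $\alpha(G[D])/|D| \ge 1/\rho(G) \ge \epsilon$. A second minor point is making sure the chosen $k$ works uniformly: since $G$ is fixed, $k = |V(G)|$ (or even $k = \chi_\ell(G)$ together with the above extension argument, being slightly more careful about the count of forbidden colors) is a legitimate single choice independent of $L$ and $r$, which is exactly what the statement demands.
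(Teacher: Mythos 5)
Your proof is correct. The paper does not prove this proposition itself (it is imported from \cite{KM23}), but your necessity argument is exactly the one sketched in the discussion immediately preceding the statement, and your sufficiency argument — color a maximum independent set of $G[D]$ with its requested colors, which is proper by independence, then extend greedily using lists of size at least $|V(G)|$, with the rounding handled by $\alpha(G[D]) \geq |D|/\rho(G) \geq \epsilon|D|$ — is the standard one; taking $k = \Delta(G)+1$ in place of $k = |V(G)|$ in the same argument recovers the bound $\chi_{\ell flex}(G) \leq \Delta(G)+1$ recorded later in the paper.
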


\cref{pro: fundamental} naturally leads to the following graph invariant which has received some attention in the literature (see~\cite{BB24, CH25, KM23}). The \emph{list flexibility number} of $G$, denoted $\chi_{\ell flex}(G)$, is the smallest $k$ such that $G$ is $(k,1/\rho(G))$-flexible.  It is easy to show that 
$$\chi(G) \leq \chi_{\ell}(G) \leq \chi_{\ell flex}(G) \leq \Delta(G)+1.$$
Since the list flexibility number of a graph is an upper bound on its list chromatic number, it is natural to ask whether the analogue of Ohba's 2002 result (\cref{thm: OhbaO}) holds in the context of list flexibility.
\begin{ques}\label{ques: BIGDADDY}
For every graph $G$, is there an $N\in \N$ such that $\chi_{\ell flex}(K_p\vee G) = \chi(K_p\vee G)$ whenever $p \geq N$?
\end{ques}
In this paper we not only show the answer to \cref{ques: BIGDADDY} is no, but we also give a characterization of the graphs for which the answer is yes.  Interestingly, it is easy to show: if $\chi_{\ell}(G) = \chi(G)$, then $\chi_{\ell}(K_1 \vee G) = \chi(K_1 \vee G)$.  In words, this says that the cone of a chromatic-choosable graph is chromatic-choosable.  To our surprise, we have not been able to answer the following question.

\begin{ques}\label{ques: smallDADDY}
For every graph $G$ satisfying $\chi_{\ell flex}(G) = \chi(G)$, does it follow $\chi_{\ell flex}(K_1\vee G) = \chi(K_1\vee G)$?
\end{ques}

\subsection{Outline of Paper}

In \cref{cone} we make some progress on \cref{ques: smallDADDY}.  We begin by proving a result on the list epsilon flexibility function of the cone of a graph.  The \emph{list epsilon flexibility function} of $G$, denoted $\epsilon_\ell(G,k)$, is the function that maps each $k \geq \chi_{\ell}(G)$ to the largest $\epsilon \in [0,1]$ such that $G$ is $(k,\epsilon)$-flexible.  Clearly, $\epsilon_{\ell}(G,k)=a/b$ for some integers $0\le a\le b\le |V(G)|$ and $\epsilon_\ell(G,k) \leq 1/\rho(G)$ (see~\cite{BB24, KM23} for more properties).

\begin{thm} \label{thm: join}
Suppose $a$ and $b$ are positive integers and $k$ is a positive integer satisfying $k\geq b/a$. Suppose $G$ is a graph such that $\epsilon_{\ell}(G,k) \geq a/b$. If $G' = K_1\vee G$, then $a/(a+b) \leq \epsilon_{\ell}(G',k+1)$.
\end{thm}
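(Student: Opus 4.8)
The plan is to fix a colour on the universal vertex $u$ of $G' = K_1\vee G$ and thereby turn a $(k+1)$-assignment on $G'$ into a list assignment on $G$ with all lists of size at least $k$, so that the hypothesis $\epsilon_{\ell}(G,k)\ge a/b$ can be invoked. The one preliminary fact I would record is that $(k,a/b)$-flexibility is inherited by oversized lists: if $L$ is any list assignment for $G$ with $|L(v)|\ge k$ for all $v$ and $s$ is a request of $L$ with domain $S$, then $G$ has a proper $L$-colouring satisfying at least $(a/b)|S|$ of the vertex requests of $s$ --- just shrink each $L(v)$ to a $k$-subset that still contains $s(v)$ whenever $v\in S$, apply $\epsilon_{\ell}(G,k)\ge a/b$, and note that a proper colouring from the smaller lists is a proper colouring from $L$. (Here $k\ge\chi_{\ell}(G)$, which is implicit in $\epsilon_{\ell}(G,k)$ being defined, handles the degenerate case $S=\emptyset$.)

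Now let $L'$ be a $(k+1)$-assignment for $G'$ and $r'$ a request of $L'$ with domain $D'$. Write $D = D'\cap V(G)$ and, for a colour $c$, set $n_c = |\{v\in D: r'(v)=c\}|$, so that $\sum_{c\in L'(u)} n_c\le|D|$. For each $c\in L'(u)$ I build a proper $L'$-colouring $f_c$ of $G'$ as follows: put $f_c(u)=c$, and colour $G$ using the oversized-list fact above with the assignment $v\mapsto L'(v)\setminus\{c\}$ and the request $r'$ restricted to $D_c:=\{v\in D: r'(v)\ne c\}$. Since $u$ is adjacent to every vertex of $G$ and no vertex of $G$ receives colour $c$, this $f_c$ is proper; it satisfies at least $(a/b)(|D|-n_c)$ of the vertex requests lying in $D$ (and none of those in $D\setminus D_c$), together with the request at $u$ in case $u\in D'$ and $c=r'(u)$.

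It remains to choose $c$ so that $f_c$ satisfies at least $\frac{a}{a+b}|D'|$ requests. If $u\notin D'$, then $|D'|=|D|$ and averaging over the $k+1$ colours of $L'(u)$ yields some $c^*$ with $n_{c^*}\le|D|/(k+1)$; then $f_{c^*}$ satisfies at least $\frac{a}{b}\cdot\frac{k}{k+1}|D|$ requests, and $\frac{a}{b}\cdot\frac{k}{k+1}\ge\frac{a}{a+b}$ is equivalent to $ak\ge b$, i.e.\ to the hypothesis $k\ge b/a$. The case $u\in D'$ is where the real difficulty lies: writing $c_0=r'(u)$, if $c_0$ happens to be a very popular request colour inside $G$ (that is, $n_{c_0}$ large), then colouring $u$ with $c_0$ to satisfy $u$'s own request simultaneously destroys many requests in $D$, so there is genuine tension between satisfying $u$ and preserving enough requests of $G$.

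To resolve this I would split on whether $n_{c_0}\le\frac{a^2|D|+b^2}{a(a+b)}$. If it is, then $f_{c_0}$ itself satisfies at least $1+\frac{a}{b}(|D|-n_{c_0})$ requests, and a short rearrangement shows that this is precisely the threshold making that quantity at least $\frac{a}{a+b}(|D|+1)=\frac{a}{a+b}|D'|$. If $n_{c_0}$ exceeds the threshold, then, first, combining this with $n_{c_0}\le|D|$ forces $a|D|>b$, and second, the other $k$ colours of $L'(u)$ satisfy $\sum_{c\ne c_0}n_c\le|D|-n_{c_0}$, so some $c^*\ne c_0$ has $n_{c^*}\le(|D|-n_{c_0})/k<\frac{a|D|-b}{a+b}$, using $k\ge b/a$ together with the bound $|D|-n_{c_0}<\frac{b(a|D|-b)}{a(a+b)}$ coming from the threshold; then $f_{c^*}$ alone satisfies more than $\frac{a}{b}\bigl(|D|-\frac{a|D|-b}{a+b}\bigr)=\frac{a}{a+b}(|D|+1)$ requests, which suffices since the count is an integer. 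All of these are one- or two-line algebraic checks; the main obstacle is exactly the $u\in D'$ tension just described, and the delicate part is locating the threshold so that the two subcases dovetail --- everything else is routine bookkeeping built on the single reduction of fixing a colour on $u$.
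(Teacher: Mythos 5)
Your proposal is correct and follows essentially the same route as the paper: fix the colour of the apex vertex $u$, delete it from the lists on $G$, restrict the request to the vertices not asking for that colour, and invoke $\epsilon_{\ell}(G,k)\ge a/b$, balancing the choice ``$u$ gets its own requested colour'' against ``$u$ gets a least-requested colour of its list,'' with $k\ge b/a$ entering exactly where you use it. The only difference is bookkeeping: the paper compares the same two candidate colourings via a proof by contradiction on a chain of inequalities, whereas you locate an explicit threshold for $n_{r'(u)}$; both computations check out.
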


With the help of \cref{thm: join} we are able to answer \cref{ques: smallDADDY} in the affirmative for every graph whose Hall ratio is equal to its clique number.

\begin{thm} \label{cor: hallin}
Suppose $G$ is such that $\rho(G)=\omega(G)$ and $\chi_{\ell flex}(G) = k$. Then $\chi_{\ell flex}(K_1 \vee G) \leq k+1$. Consequently, if $G$ also satisfies $\chi_{\ell flex}(G) = \chi(G)$, then $\chi_{\ell flex}(K_1 \vee G) = \chi(K_1 \vee G).$
\end{thm}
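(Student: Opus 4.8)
The plan is to derive the first statement as a direct consequence of \cref{thm: join}, and then deduce the second statement from the first together with elementary facts about $\chi$, $\rho$, and $\omega$ under coning. First I would set $k = \chi_{\ell flex}(G)$, so by definition $G$ is $(k, 1/\rho(G))$-flexible, i.e.\ $\epsilon_\ell(G,k) \geq 1/\rho(G)$. Using the hypothesis $\rho(G) = \omega(G)$, write $\rho(G) = \omega(G) =: \omega$, so that $\epsilon_\ell(G,k) \geq 1/\omega$, which puts us in the situation of \cref{thm: join} with $a = 1$ and $b = \omega$. To invoke \cref{thm: join} I need to check its hypothesis $k \geq b/a = \omega$; this holds because $\chi_{\ell flex}(G) \geq \chi(G) \geq \omega(G) = \omega$. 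Applying \cref{thm: join} then yields $\epsilon_\ell(K_1 \vee G,\, k+1) \geq a/(a+b) = 1/(\omega+1)$.

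Next I would translate this back into a statement about $\chi_{\ell flex}$. We have $\omega(K_1 \vee G) = \omega(G) + 1 = \omega + 1$, and since $\omega(H) \leq \rho(H) \leq \chi_f(H)$ for every graph $H$, I should check that $\rho(K_1 \vee G) = \omega + 1$ as well; this follows because adding a universal vertex increases the Hall ratio by exactly one when it starts equal to the clique number — concretely, $\rho(K_1 \vee G) \le 1 + \rho(G)$ since any subgraph $H' \subseteq K_1 \vee G$ either avoids the apex (and then $|V(H')|/\alpha(H') \le \rho(G)$) or contains it (and then $|V(H')|/\alpha(H') = (|V(H)|+1)/\alpha(H)\cdot$ adjustments give at most $\rho(G)+1$, using $\alpha(H') = \alpha(H)$ for the subgraph $H$ obtained by deleting the apex), while $\rho(K_1\vee G)\ge \omega(K_1\vee G)=\omega+1$. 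Hence $1/\rho(K_1 \vee G) = 1/(\omega+1) \leq \epsilon_\ell(K_1\vee G, k+1)$, which says precisely that $K_1 \vee G$ is $(k+1, 1/\rho(K_1\vee G))$-flexible, i.e.\ $\chi_{\ell flex}(K_1 \vee G) \leq k+1$, proving the first assertion.

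For the second assertion, assume additionally $\chi_{\ell flex}(G) = \chi(G)$, so $k = \chi(G)$. From the first part, $\chi_{\ell flex}(K_1\vee G) \le k+1 = \chi(G)+1 = \chi(K_1 \vee G)$, where the last equality is the standard fact that coning raises the chromatic number by exactly one. On the other hand, the chain $\chi(K_1\vee G) \leq \chi_\ell(K_1 \vee G) \leq \chi_{\ell flex}(K_1\vee G)$ from the introduction gives the reverse inequality, so $\chi_{\ell flex}(K_1\vee G) = \chi(K_1\vee G)$, as desired.

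The only step requiring genuine (if routine) care is the verification that $\rho(K_1\vee G) = \omega(G)+1$; everything else is bookkeeping around the definitions and a single application of \cref{thm: join}. I expect the Hall-ratio computation to be the main obstacle, but it reduces to the elementary case analysis on whether a subgraph of $K_1 \vee G$ contains the apex vertex, combined with the lower bound $\rho \geq \omega$ — so no serious difficulty is anticipated.
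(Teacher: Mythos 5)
Your proposal is correct and follows essentially the same route as the paper: apply \cref{thm: join} with $a=1$ and $b=\omega(G)$ (the hypothesis $k\ge b/a$ being supplied by $\omega(G)\le\chi(G)\le\chi_{\ell flex}(G)=k$), then verify $\rho(K_1\vee G)=\omega(G)+1$ to convert the conclusion $\epsilon_\ell(K_1\vee G,k+1)\ge 1/(\omega(G)+1)$ into $\chi_{\ell flex}(K_1\vee G)\le k+1$. The only difference is cosmetic: you compute $\rho(K_1\vee G)$ by a direct case analysis on whether a subgraph contains the apex (which works once one notes that the maximum defining $\rho$ is attained at induced subgraphs, so $\alpha$ is unchanged by adding the apex), whereas the paper derives the same fact from the $w$-function machinery of \cref{pro: Basic} and \cref{thm: aTheorem}.
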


Note that for any graph $G$ for which $\omega(G)$ equals the fractional chromatic number of $G$ (e.g., perfect graphs), we have $\rho(G) = \omega(G)$.  On the other hand, \cref{cor: hallin} doesn't apply when $G=C_{2k+1}$ and $k \geq 2$ since in this case $\omega(G)=2 < (2k+1)/k = \rho(G)$.  We end \cref{cone} by showing that the answer to \cref{ques: smallDADDY} is yes when $G$ is an odd cycle.  This provides further evidence towards an affirmative answer to \cref{ques: smallDADDY}.

In \cref{answer} we show that the answer to \cref{ques: BIGDADDY} is no and characterize the graphs for which the answer is yes by proving the following result.

\begin{thm} \label{thm: main}
For graph $G$ there is an $N \in \N$ such that $\chi_{\ell flex}(K_p \vee G) = \chi(K_p \vee G)$ whenever $p \geq N$ if and only if for every $S \subseteq V(G)$ with $|S| = \omega(G) + 1$ there exist $u, v \in S$ such that there is a proper $\chi(G)$-coloring of $G$ that colors $u$ and $v$ the same.
\end{thm}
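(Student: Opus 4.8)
The plan is to prove both directions by analyzing, for large $p$, which $k$-assignments $L$ on $K_p \vee G$ and requests $r$ witness a failure of $(k, 1/\rho(K_p \vee G))$-flexibility. Throughout, write $H = K_p \vee G$, and note that for $p$ large we have $\omega(H) = p + \omega(G)$ and, since adding a dominating vertex raises the Hall ratio by exactly $1$, $\rho(H) = p + \rho(G)$; also $\chi(H) = p + \chi(G)$. Since $\rho(G)$ need not equal $\omega(G)$, the target fraction $1/\rho(H)$ is generally strictly less than $1/\omega(H)$, so a request $r$ on $H$ is ``hard to satisfy'' only if the subgraph $G[D]$ induced by the domain $D$ where $r$ is constant has small independence number relative to $|D|$ — which forces $D$ to live (mostly) inside a small clique. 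Concretely, since $\rho(K_p \vee G[D]) \le \rho(H)$ always, the constraint $\epsilon |D| \le \alpha(H[D])$ only bites when $\alpha(H[D])$ is as small as it can be, i.e.\ when $D$ induces a large clique; the extremal case is $|D| = \omega(H)+1 = p + \omega(G) + 1$ with $\alpha = 2$.

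\medskip

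\textbf{($\Leftarrow$) The combinatorial condition suffices.} Assume that for every $(\omega(G)+1)$-subset $S$ of $V(G)$ there are $u,v \in S$ and a proper $\chi(G)$-coloring of $G$ identifying $u$ and $v$. We want to show $\chi_{\ell flex}(H) = \chi(H) = p + \chi(G)$ for $p$ large, i.e.\ that $H$ is $(p+\chi(G),\, 1/\rho(H))$-flexible. First I would invoke Theorem~\ref{thm: OhbaO} (Ohba): for $p$ large, $H$ is chromatic-choosable, so every $(p+\chi(G))$-assignment $L$ admits a proper $L$-coloring. It remains to satisfy a $1/\rho(H)$-fraction of any request $r$ with domain $D$. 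Split into cases on $D$. If $G[D \cap V(G)]$ has an independent set of size $\ge |D|/\rho(H)$, then (after coloring $K_p$ and the rest arbitrarily via chromatic-choosability, with some care to free up the needed colors) we can satisfy those requests; this is the ``easy'' regime and should follow from a counting/greedy argument plus Ohba on the leftover graph. The delicate regime is when $D$ is essentially a maximum clique of $H$, forcing $|D \cap V(G)|$ to contain a clique of size $\omega(G)$ or an $(\omega(G)+1)$-set $S$; there the hypothesis gives two vertices $u,v \in S$ that some proper $\chi(G)$-coloring colors alike, and I would use this coloring, transported through $L$ via an Ohba-type argument on $K_p \vee (G - \text{some vertices})$, to produce a proper $L$-coloring satisfying both $(u,r(u))$ and $(v,r(v))$ — two out of $|D| \approx p$, which meets the threshold $|D|/\rho(H) = |D|/(p+\rho(G))$ once $|D| \le 2(p + \rho(G))$, true for large $p$ since $|D| \le p + |V(G)|$. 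Making ``transported through $L$'' precise — showing the abstract existence of a $\chi(G)$-coloring identifying $u,v$ upgrades to an $L$-coloring of $H$ identifying the corresponding vertices — is where I expect to lean on a careful adaptation of the proof of Theorem~\ref{thm: OhbaO}, or on Theorem~\ref{thm: Ohba} applied to an auxiliary graph.

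\medskip

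\textbf{($\Rightarrow$) Necessity.} Suppose the condition fails: there is an $(\omega(G)+1)$-set $S$ such that no proper $\chi(G)$-coloring of $G$ colors any two vertices of $S$ alike — equivalently, in every proper $\chi(G)$-coloring the $\omega(G)+1$ vertices of $S$ all get distinct colors, which forces $\chi(G[S \cup \{\cdot\}]) $-type rigidity. I would then build, for arbitrarily large $p$, a $(p+\chi(G))$-assignment $L$ on $H$ and a request $r$ witnessing non-flexibility at the threshold $1/\rho(H)$. The idea: put a constant request on $D = V(K_p) \cup S$, which has size $p + \omega(G) + 1 = \omega(H) + 1$, so $\alpha(H[D]) = 2$ and the threshold demands we satisfy $\lceil (p+\omega(G)+1)/(p+\rho(G))\rceil$ of them, which is $2$ for $p$ large (since $\omega(G)+1 > \rho(G)$ fails in general — but when $\rho(G) < \omega(G)+1$, i.e.\ always, the ceiling is $2$). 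So we must satisfy at least two requests in $D$, meaning two vertices of $D$ get the common requested color $c$; since $V(K_p)$ is a clique joined to everything, at most one vertex of $V(K_p) \cup S$ can be colored $c$ unless both lie in $S$ — hence we must color two vertices of $S$ with $c$. Now I choose $L$ so that, modulo the colors forced on $K_p$, a proper $L$-coloring of $H$ that colors two vertices of $S$ the same induces a proper $\chi(G)$-coloring of $G$ coloring two vertices of $S$ the same, contradicting the failed hypothesis; the standard device is to give the $p$ clique-vertices disjoint ``fresh'' blocks of colors forcing $\chi(G)$ colors to remain for $G$, mimicking the reduction used to prove Ohba-type lower bounds. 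The main obstacle here is arranging $L$ and checking $|D|/\alpha$ lands exactly so that satisfying a single request is genuinely impossible — i.e.\ ruling out that one could satisfy the lone ``one request'' that the threshold might (for borderline $p$) allow; handling the floor/ceiling arithmetic of $1/\rho(H)$ against $|D|$ uniformly in large $p$ is the fussy part, and I would isolate it as a preliminary lemma computing $\lceil |D| / \rho(H) \rceil$ for all relevant $D$.

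\medskip

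\textbf{Main difficulty.} The crux in both directions is the same: translating statements about proper $\chi(G)$-colorings of $G$ (which identify, or refuse to identify, a prescribed pair of vertices) into statements about proper $L$-colorings of the large cone $K_p \vee G$ under an \emph{arbitrary} worst-case list assignment $L$. For the sufficiency direction this is a constructive strengthening of Ohba's theorem (``chromatic-choosable, and moreover we may prescribe that a given monochromatic pair from a $\chi(G)$-coloring remains monochromatic''), and for the necessity direction it is the converse rigidity. I expect the bulk of the real work to be packaging a version of the Noel–Reed–Wu / Ohba machinery that respects a single identified pair of vertices; everything else is threshold bookkeeping and case analysis on the domain $D$.
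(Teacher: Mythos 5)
Your overall architecture matches the paper's (case analysis on the size of the request domain, a constant request on $V(K_p)\cup S$ for necessity, and an Ohba-type transfer for sufficiency), but there is a concrete error at the foundation: you assert that $\rho(K_p\vee G)=p+\rho(G)$ because ``adding a dominating vertex raises the Hall ratio by exactly $1$.'' This is false. Taking the subgraph $K_p\vee K_{\omega(G)}$ shows $\rho(K_p\vee G)\ge p+\omega(G)$, and a short computation with the $w$-function (the paper's \cref{pro: clique}) shows that for $p$ large enough $\rho(K_p\vee G)=p+\omega(G)=\omega(K_p\vee G)$, \emph{not} $p+\rho(G)$; e.g.\ $\rho(K_1\vee C_5)=3\ne 1+\tfrac52$. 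Your parenthetical ``$\rho(G)<\omega(G)+1$, i.e.\ always'' is also false ($\rho$ can exceed $\omega$ by an arbitrary amount, e.g.\ for triangle-free graphs of large fractional chromatic number). This is not cosmetic: your necessity argument needs the threshold $\lceil|D|/\rho(H)\rceil$ to equal $2$ for $|D|=p+\omega(G)+1$, and with your formula the threshold would be $1$ whenever $\rho(G)\ge\omega(G)+1$, collapsing the lower bound. The collapse of $\rho(K_p\vee G)$ to the clique number for large $p$ is exactly the structural fact that makes the theorem's condition (which is phrased in terms of $\omega(G)+1$, not $\rho$) the right one, and it must be proved, not assumed.

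The second gap is the one you flag yourself: the entire sufficiency direction rests on ``transporting'' an abstract proper $\chi(G)$-coloring that identifies $u$ and $v$ into a proper $L$-coloring of $K_p\vee G$ under an arbitrary worst-case $(p+\chi(G))$-assignment, and you leave this as ``a careful adaptation of Ohba.'' The paper makes this precise as a standalone choosability lemma: after precoloring the identified pair with the requested color and deleting it, one must show $K_p\vee(G-S)$ is $f$-choosable where the surviving vertices of the relevant color class keep full-size lists and all other vertices lose one color; this is proved via the Small Pot Lemma combined with \cref{thm: Ohba}, and it is the genuine technical content of the direction. Without some version of that lemma (or an equivalent device), the sufficiency direction is not a proof but a plan. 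As written, the proposal is an essentially correct outline of the paper's strategy with one wrong key computation and the central lemma left unproved.
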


While many graphs satisfy the condition in \cref{thm: main}, it is challenging to find graphs that do not; nevertheless, there are infinitely many graphs for which the condition in \cref{thm: main} fails to hold.  Recall a graph is said to be \emph{uniquely $k$-colorable} if there is only one partition of its vertex set into $k$ color classes.  It is well-known that for each $k \geq 3$ there is a uniquely $k$-colorable graph $G$ satisfying $\omega(G) = k-1$ (see~\cite{HH69}).  Note that for such a graph $G$ if $S \subseteq V(G)$ is a $k$-element set containing one vertex from each color class of a proper $k$-coloring of $G$, there do not exist two vertices in $S$ that are colored the same by some proper $k$-coloring of $G$. 

The existence of graphs that don't satisfy the condition of \cref{thm: main} also demonstrates that for any $\epsilon \in \R^+$ there is a graph $G$ such that $\chi_{\ell flex}(G) > \chi(G)$ and $|V(G)|/\chi(G) < 1 + \epsilon$.  Consequently, one could not hope to prove a flexible list analogue of \cref{thm: Ohba} without additional hypotheses.

On the other hand, for a graph $G$ that does satisfy the condition of \cref{thm: main} (e.g., a bipartite graph, a complete multipartite graph, a uniquely $k$-colorable graph with clique number $k$) no serious attempt in this paper has been made to find the optimal $N$ for which  $\chi_{\ell flex}(K_p \vee G) = \chi(K_p \vee G)$ whenever $p \geq N$.  Consequently, for a future research project it would be interesting to study the smallest such $N$.  The analogous problem of finding the smallest possible $N$ for which  $\chi_{\ell}(K_p \vee G) = \chi(K_p \vee G)$ whenever $p \geq N$ has been pursued by several authors~\cite{A09, AJ15, CG24, N14}. 

\section{List Flexibility of the Cone of a Graph} \label{cone}

We begin this section by proving \cref{thm: join}.

\begin{proof}
Suppose the vertex set of the copy of  $K_1$ used to form $G'$ is $\{u\}$. Suppose $L$ is an arbitrary ($k+1$)-assignment of $G'$ and $r$ is a request of $L$ with domain $D$. Let $U=\{u\} \cap D$, and let $C : L(u) \rightarrow \mathbb{Z}$ be the function given by $C(z) = |r^{-1}(z) \cap V(G)|$. Name the colors of $L(u)$ so that $L(u) = \{c_1,\ldots,c_{k+1}\}$ and $C(c_1) \geq C(c_2) \geq \cdots \geq C(c_{k+1})$. Let $R=D- r^{-1}(L(u))$.  Notice
$$|D| =|U| + |R| + \sum_{i=1}^{k+1} C(c_i).$$
We will now construct two proper $L$-colorings of $G$, $f$ and $g$.  We construct $f$ as follows. Let $f(u) = c_{k+1}$. For each $v\in V(G)$, let $L'(v)=L(v)-\{c_{k+1}\}$. If $L'(v)$ has $k+1$ elements and $v\notin D$, delete an element of $L'(v)$ from $L'(v)$.  If $L'(v)$ has $k+1$ elements and $v\in D$ delete an element of $L'(v)$ from $L'(v)$ that is not $r(v)$.  Note that $L'$ is a $k$-assignment for $G$. Let $D' = D- (r^{-1}(c_{k+1}) \cup U)$, and let $r': D'\rightarrow \bigcup_{v\in V(G)}L'(v)$ be the function given by $r'(v) = r(v)$. Note that $r'$ is a request of $L'$. Since $\epsilon_{\ell}(G,k) \geq a/b$, there is a proper $L'$-coloring $h$ of $G$ with the property that $h(v) = r'(v)$ for at least $a|D'|/b$ of the vertices in $D$. To complete $f$, let $f(v) = h(v)$ for each $v\in V(G)$. Notice $f$ satisfies at least $a|D'|/b$ of the vertex requests of $r$, and
$|D'| = |R| + \sum_{i=1}^k C(c_i).$

We construct $g$ as follows. If $u\notin D$, let $g(u)=c_1$.  If $u\in D$, let $g(u) = r(u)$.  For each $v\in V(G)$, let $L''(v)=L(v)-\{g(u)\}$. If $L''(v)$ has $k+1$ elements and $v\notin D$, delete an element of $L''(v)$ from $L''(v)$.  If $L''(v)$ has $k+1$ elements and $v\in D$ delete an element of $L''(v)$ from $L''(v)$ that is not $r(v)$.  Note that $L''$ is a $k$-assignment for $G$. Let $D'' = D- (r^{-1}(g(u)) \cup U)$, and let $r'': D''\rightarrow \bigcup_{v\in V(G)}L''(v)$ be the function given by $r''(v) = r(v)$. Note that $r''$ is a request of $L''$.  Since $\epsilon_{\ell}(G,k) \geq a/b$, there is a proper $L''$-coloring $j$ of $G$ that satisfies at least $a|D''|/b$ of the vertex requests of $r''$. To complete $g$, let $g(v) = j(v)$ for each $v\in V(G)$. Notice $g$ satisfies at least $|U|+a|D''|/b$ of the vertex requests of $r$, and 
$|D''| \geq |R| + \sum_{i=2}^{k+1} C(c_i).$

Now, notice that if $a|D'|/b \geq a|D|/(a+b)$, then the desired result follows as $f$ satisfies at least $a|D|/(a+b)$ of the vertex requests of $r$.  So, we may assume that $a|D'|/b < a|D|/(a+b)$. This means $a(\sum_{i=1}^{k} C(c_i) +|R|)/b < a (|U|+|R|+\sum_{i=1}^{k+1}C(c_i))/(a+b)$ which implies $$a\sum_{i=1}^{k} C(c_i) +a|R| < b|U|+ bC(c_{k+1}).$$ Since $|R| \geq 0$, we have $a\sum_{i=1}^{k} C(c_i) < b|U|+ bC(c_{k+1})$ which implies
$$\sum_{i=1}^{k}\left(aC(c_i)-\frac{b}{k}C(c_{k+1})\right) < b|U|.$$
\\
Notice $k \geq b/a$ implies $bC(c_{k+1})/k \leq aC(c_{k+1}) \leq aC(c_i)$ for each $i \in [k]$. Consequently, for each $i \in [k]$, $aC(c_i)-bC(c_{k+1})/k \geq 0.$ Thus, $$\frac{a}{b}C(c_1)-\frac{1}{k}C(c_{k+1}) < |U|.$$
\\
Clearly, this inequality is a contradiction when $|U| = 0$. Therefore, we may assume from this point forward that $|U| = 1$.

Notice that $k \geq b/a$ implies $ka^2/b^2 \geq 1/k$ which implies $aC(c_1)/b-C(c_{k+1})/k \geq aC(c_1)/b - ka^2C(c_{k+1})/b^2$. Since $aC(c_1)/b-C(c_{k+1})/k < 1$, $aC(c_1)/b - ka^2C(c_{k+1})/b^2 \leq 1$. This implies $$\frac{b}{a+b} + \frac{a^2}{b(a+b)}kC(c_{k+1}) \geq \frac{a}{a+b}C(c_1).$$ 
\\
Note that $\sum_{i=2}^{k+1} C(c_i) \geq kC(c_{k+1})$. As such, $$\frac{b}{a+b} + \left(\frac{a(a+b)-ab}{b(a+b)}\right) \sum_{i=2}^{k+1}C(c_i) \geq \frac{a}{a+b}C(c_1).$$ 
\\
Note also that $(a/b\ - a/(a+b))|R| \geq 0$. So, $$1-\frac{a}{a+b} + \left(\frac{a}{b}\ - \frac{a}{a+b}\right) \sum_{i=2}^{k+1}C(c_i)+ \left(\frac{a}{b}\ - \frac{a}{a+b}\right)|R|\geq \frac{a}{a+b}C(c_1).$$ Thus, $$|U| + \frac{a}{b} \left(\sum_{i=2}^{k+1} C(c_i) + |R|\right) \geq \frac{a}{a+b} \left(|U| + \sum_{i=1}^{k+1} C(c_1) + |R|\right) = \frac{a}{a+b}|D|.$$

Notice the left side of the most recent inequality is a lower bound on the number of vertex requests of $r$ satisfied by $g$. Thus we have that $f$ or $g$ is a proper $L$-coloring of $G$ that satisfies at least $a|D|/(a+b)$ vertex requests of $r$.
\end{proof}

Our proofs of \cref{cor: hallin} and \cref{thm: main} require some results and terminology from~\cite{B16} which we now present. For a fixed graph $G$, the \emph{$w$-function of $G$}, denoted as $w(a,G)$, maps each $a \in [\alpha(G)]$ to the order of the largest subgraph of $G$ that has independence number $a$. Several basic statements follow immediately from the definition.

\begin{pro} [\cite{B16}] \label{pro: Basic}
For a fixed graph $G$, the following statements hold.
\\
1. $w(1,G) = \omega(G).$
\\
2. $w(\alpha(G), G) = |V(G)|.$
\\
3. $\rho(G) = \max_{a \in [\alpha(G)]} w(a,G)/a.$
\end{pro}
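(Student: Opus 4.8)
The plan is to dispatch each of the three claims directly from the definition of the $w$-function, since each is an elementary observation; the only mild care needed concerns the (standard) interpretation of ``subgraph.'' For part~1, I would note that a subgraph $H \subseteq G$ satisfies $\alpha(H) = 1$ precisely when $H$ is a nonempty complete graph; since every edge of $H$ is an edge of $G$, the vertex set of such an $H$ is a clique of $G$, so $|V(H)| \le \omega(G)$. As $K_{\omega(G)}$ is itself a subgraph of $G$ with independence number $1$, the largest such subgraph has order exactly $\omega(G)$, giving $w(1,G) = \omega(G)$. For part~2, the graph $G$ itself is a subgraph of $G$ with independence number $\alpha(G)$ and order $|V(G)|$, and no subgraph of $G$ has more than $|V(G)|$ vertices; hence $w(\alpha(G),G) = |V(G)|$.

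Part~3 is the only claim requiring two inequalities, and I would prove it by comparing the two maxima. For the direction $\rho(G) \ge \max_{a \in [\alpha(G)]} w(a,G)/a$: for each $a \in [\alpha(G)]$ choose a subgraph $H_a \subseteq G$ attaining the maximum in the definition of $w(a,G)$, so that $\alpha(H_a) = a$ and $|V(H_a)| = w(a,G)$; then $H_a$ witnesses $\rho(G) \ge |V(H_a)|/\alpha(H_a) = w(a,G)/a$, and taking the maximum over $a$ gives the claimed inequality. For the reverse direction: let $H \subseteq G$ attain the maximum in the definition of $\rho(G)$ and put $a := \alpha(H)$, which lies in $[\alpha(G)]$ since $1 \le \alpha(H) \le \alpha(G)$. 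Because $H$ is itself a subgraph of $G$ with independence number $a$, its order is at most $w(a,G)$, so $\rho(G) = |V(H)|/\alpha(H) = |V(H)|/a \le w(a,G)/a \le \max_{a' \in [\alpha(G)]} w(a',G)/a'$. Combining the two inequalities yields equality.

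I do not expect any genuine obstacle here: the argument is bookkeeping once one observes that the maxima defining $w(a,G)$ and $\rho(G)$ are attained (the graph $G$ has only finitely many subgraphs) and that for every $a \in [\alpha(G)]$ some subgraph of $G$ has independence number exactly $a$ (an independent set on $a$ vertices suffices), so that all the quantities involved are well defined. The one point worth a sentence of comment is that the three statements are insensitive to whether ``subgraph'' is read as ``induced subgraph,'' since in each case the extremal configuration may be taken induced on its vertex set without changing the relevant order or independence number.
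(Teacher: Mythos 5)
Your proof is correct; the paper states this proposition without proof (citing its source and remarking that the statements ``follow immediately from the definition''), and your argument is exactly the direct verification that remark alludes to. The one point you rightly single out --- passing to the induced subgraph on $V(H)$ so that $\alpha(H)\le\alpha(G)$ and the extremizer of $\rho(G)$ yields a valid index $a\in[\alpha(G)]$ in part~3 --- is the same observation the paper itself makes when discussing the Hall ratio, so nothing is missing.
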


The following result is important for this paper in the case where $H$ is a complete graph.

\begin{thm} [\cite{B16}] \label{thm: aTheorem}
Suppose $G$ and $H$ are graphs such that $\alpha(H) \leq \alpha(G)$. Then for each $a \in [\alpha(G)]$, 
\[w(a, H \vee G)=
        \begin{cases}
        w(a,H)+w(a,G) \text{ if } a\in [\alpha(H)]
        \\
        |V(H)| +w(a,G) \text{ if } \alpha(H) \leq a \leq \alpha(G).
        
        \end{cases}
\]
\end{thm}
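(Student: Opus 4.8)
The plan is to reduce the claim about the largest subgraph of $H\vee G$ with a prescribed independence number to a statement about largest \emph{induced} subgraphs, and then to exploit the fact that an independent set of a join never meets both sides. The key structural observation is that in $X\vee Y$ every vertex of $X$ is adjacent to every vertex of $Y$, so every independent set of $X\vee Y$ lies entirely in $X$ or entirely in $Y$; hence $\alpha(X\vee Y)=\max\{\alpha(X),\alpha(Y)\}$, and in particular $\alpha(H\vee G)=\alpha(G)$. More generally, for $W=V_H\cup V_G$ with $V_H\subseteq V(H)$ and $V_G\subseteq V(G)$ disjoint, the induced subgraph satisfies $(H\vee G)[W]=H[V_H]\vee G[V_G]$, and therefore $\alpha\big((H\vee G)[W]\big)=\max\{\alpha(H[V_H]),\alpha(G[V_G])\}$.

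Next I would prove an interpolation lemma: for any graph $M$ and any $W\subseteq V(M)$, there is a subgraph of $M$ with vertex set $W$ and independence number exactly $t$ if and only if $\alpha(M[W])\le t\le|W|$. The forward direction is immediate, since deleting edges can only increase the independence number, so every subgraph on $W$ has $\alpha\ge\alpha(M[W])$; for the converse, delete the edges of $M[W]$ one at a time, noting that each deletion raises the independence number by $0$ or $1$, so $\alpha$ passes through every integer between $\alpha(M[W])$ and $|W|$. Since an independent set of size $a$ already realizes $\alpha=a$, this yields, for every $a\in[\alpha(M)]$,
\[
w(a,M)=\max\{\,|W| : W\subseteq V(M),\ \alpha(M[W])\le a\,\}.
\]

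I would then apply this identity with $M=H\vee G$, which is legitimate because $a\le\alpha(G)=\alpha(H\vee G)$. By the join computation above, the constraint $\alpha((H\vee G)[W])\le a$ is equivalent to the pair of \emph{independent} constraints $\alpha(H[V_H])\le a$ and $\alpha(G[V_G])\le a$, so the maximization decouples:
\[
w(a,H\vee G)=\max_{\alpha(H[V_H])\le a}|V_H|\;+\;\max_{\alpha(G[V_G])\le a}|V_G|.
\]
By the displayed formula the second term equals $w(a,G)$ for all $a\in[\alpha(G)]$. For the first term: if $a\in[\alpha(H)]$ it equals $w(a,H)$ by the same formula, while if $\alpha(H)<a\le\alpha(G)$ the constraint $\alpha(H[V_H])\le\alpha(H)<a$ holds vacuously and the term is simply $|V(H)|$. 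Combining the two cases gives the stated formula; they agree on the overlap $a=\alpha(H)$ because $w(\alpha(H),H)=|V(H)|$ by the second part of \cref{pro: Basic}, which is exactly why the ranges can be written with the closed intervals $[\alpha(H)]$ and $[\alpha(H),\alpha(G)]$.

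I do not anticipate a genuine obstacle. The one point that needs care — and is the real content of the argument — is the interpolation step: without knowing that ``the largest subgraph with independence number exactly $a$'' has the same order as ``the largest induced subgraph with independence number at most $a$,'' one cannot pass to the induced-subgraph picture in which the join structure cleanly separates the two sides. Once that reduction is in hand, everything else is routine bookkeeping.
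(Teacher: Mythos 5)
Your proof is correct. The paper states this result as a citation to \cite{B16} and does not reproduce a proof, so there is nothing internal to compare against; but your argument is complete and sound, and you have correctly identified the one nontrivial point: since $w(a,\cdot)$ is defined via arbitrary (not induced) subgraphs, one needs the edge-deletion interpolation lemma to show $w(a,M)=\max\{|W|:\alpha(M[W])\le a\}$ before the join structure can be exploited to decouple the maximization over the two sides.
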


We are now ready to prove \cref{cor: hallin}.

\begin{proof}
For each $i\in [\alpha(G)]$, let $a_i = w(i,G)$. \cref{pro: Basic} implies $a_1 = \omega(G)$. Since $\rho(G) = \omega(G)$, \cref{pro: Basic} also implies $a_1 = \max\{a_i/i : i \in [\alpha(G)]\}$. Let $M = K_1 \vee G$, and note $\alpha(M)=\alpha(G)$. By \cref{thm: aTheorem}, for each $i\in [\alpha(G)]$, $w(i,M) = 1+a_i$. Then by \cref{pro: Basic}, $\rho(M) = \max\{(1+a_i)/i:i\in[\alpha(G)]\}$. For any $j \in [\alpha(G)]$, $a_1 \geq a_j/j$, which means $1+a_1 \geq (1+a_j)/j$. Thus, $\rho(M) = 1+a_1 = 1+\omega(G)$. 

Clearly $\omega(G) \leq \chi(G) \leq \chi_{\ell flex}(G) = k$. Since $\chi_{\ell flex}(G) = k$, $\epsilon_{\ell}(G,k) = 1/\rho(G) = 1/\omega(G)$. By \cref{thm: join}, $1/(\omega(G)+1) \leq \epsilon_{\ell}(M,k+1)$. Since $\rho(M) = 1+\omega(G)$, $\chi_{\ell flex}(K_1 \vee G) \leq k+1$.
\end{proof} 

Note the \cref{cor: hallin} doesn't apply when $G=C_{2k+1}$ and $k \geq 2$ since in this case $\omega(G)=2 < (2k+1)/k = \rho(G)$.  We will now show that the answer to \cref{ques: smallDADDY} is yes when $G$ is an odd cycle.

\begin{pro} \label{pro: oddcycles}
Suppose $k\geq 2$. Then $\chi_{\ell flex}(K_1 \vee C_{2k+1}) =4$. 
\end{pro}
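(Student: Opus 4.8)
\emph{Overview and the Hall ratio.} The plan is to compute $\rho(K_1\vee C_{2k+1})$, read off the lower bound for free, and then prove the matching upper bound by reducing to the rim. Using \cref{pro: Basic} and \cref{thm: aTheorem} with $H=K_1$, I would first record that $w(a,C_{2k+1})=2a$ for $1\le a\le k-1$ (a subgraph of a cycle other than the whole cycle is a disjoint union of paths, hence has at most twice its independence number many vertices, and $P_{2a}$ attains the bound) while $w(k,C_{2k+1})=2k+1$; by \cref{thm: aTheorem}, $w(a,K_1\vee C_{2k+1})=1+w(a,C_{2k+1})$ for every $a\in[\alpha(C_{2k+1})]$, so by \cref{pro: Basic}, $\rho(K_1\vee C_{2k+1})=\max\bigl(3,\,(2k+2)/k\bigr)=3$ for $k\ge 2$. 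Since $\chi(K_1\vee C_{2k+1})=1+\chi(C_{2k+1})=4>3=\rho(K_1\vee C_{2k+1})$, and $\chi_{\ell flex}\ge\chi$ always, we get $\chi_{\ell flex}(K_1\vee C_{2k+1})\ge 4$, and it remains to show $K_1\vee C_{2k+1}$ is $(4,1/3)$-flexible.

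\emph{A flexibility fact for the rim.} The workhorse is that $C_{2k+1}$ is $(3,k/(2k+1))$-flexible. Given a $3$-assignment $L$ of $C_{2k+1}$ and a request $r$ with domain $D$, since $\rho(C_{2k+1}[D])\le\rho(C_{2k+1})=(2k+1)/k$ there is an independent set $I\subseteq D$ with $|I|\ge k|D|/(2k+1)$; colour each vertex of $I$ with its requested colour. What remains of $C_{2k+1}$ is a disjoint union of paths in which every vertex adjacent to $I$ still has at least $3-1=2$ available colours unless both of its neighbours lie in $I$, in which case it is an isolated remaining vertex with at least $3-2=1$ available colour; such a list assignment on a disjoint union of paths always admits a proper colouring, which together with the colours on $I$ satisfies at least $k|D|/(2k+1)$ of the requests. (If this is already available in the literature one could cite it instead.)

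\emph{Reducing the cone to the rim.} Let $u$ be the hub, let $D'=D\cap V(C_{2k+1})$, and for $z\in L(u)$ put $C(z)=|\{v\in D': r(v)=z\}|$, ordering $L(u)=\{c_1,c_2,c_3,c_4\}$ with $C(c_1)\ge C(c_2)\ge C(c_3)\ge C(c_4)$. Following the template of the proof of \cref{thm: join}, I would construct two proper $L$-colourings of $K_1\vee C_{2k+1}$: one with $u$ coloured $c_4$ (the colour requested by the fewest rim vertices), and one with $u$ coloured $r(u)$ if $u\in D$ and $c_1$ otherwise. In each case delete the colour placed on $u$ from every rim list (deleting some non-requested colour instead where that colour is absent) to obtain a $3$-assignment of $C_{2k+1}$, and invoke the rim fact to satisfy a $k/(2k+1)$-fraction of the surviving rim requests; the second colouring also satisfies the request at $u$ when $u\in D$. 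A computation in the spirit of the one in \cref{thm: join}, but now with $\epsilon_0=k/(2k+1)$ in place of $a/b$ and additionally exploiting integrality of the counts of satisfied requests together with $C(c_2)+C(c_3)\ge 2C(c_4)$ and $|D'|\le 2k+1$, should show that one of the two colourings satisfies at least $|D|/3$ requests whenever $k$ is not too small.

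\emph{The expected obstacle.} The crude fraction $k/(2k+1)$ only barely exceeds $1/3$ for small $k$ (indeed \cref{thm: join} alone yields only $k/(3k+1)<1/3$, so some genuinely sharper argument is unavoidable), so the two-colouring calculation is tight and I expect the cases $k=2$ and $k=3$ to require a separate, finite analysis. Here the natural move is to note that for $|D|\le 6$ one needs only $1$ or $2$ satisfied requests (and for $k\le 3$ one never needs more than $3$), which can be secured by satisfying one, two, or three well-chosen pairwise non-adjacent vertices of $D$ and then completing to a proper $L$-colouring; the secondary nuisance in this completion is to arrange that the colouring of the rim does not exhaust all four colours of $L(u)$, so that $u$ still has a legal colour — for which the $4$-list room on the rim is enough but needs checking. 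I expect this bookkeeping — the fractional argument for large $|D|$ plus the ad hoc analysis for the small remaining cases — to be the bulk of the work.
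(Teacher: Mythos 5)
Your computation of $\rho(K_1\vee C_{2k+1})=3$ and the resulting lower bound $\chi_{\ell flex}(K_1\vee C_{2k+1})\ge 4$ match the paper. The gap is in the upper bound, and it stems from a false premise: you assert that because \cref{thm: join} only yields $\epsilon_\ell(K_1\vee C_{2k+1},4)\ge k/(3k+1)<1/3$, ``some genuinely sharper argument is unavoidable.'' It is not. The paper's entire upper-bound argument is the observation that the number of satisfied requests is an integer and that the domain $D$ of any request satisfies $|D|\le|V(K_1\vee C_{2k+1})|=2k+2<3k+1$ (using $k\ge 2$), whence
\[
\left\lceil\frac{k}{3k+1}|D|\right\rceil=\left\lceil\frac{|D|}{3}-\frac{|D|}{3(3k+1)}\right\rceil=\left\lceil\frac{|D|}{3}\right\rceil,
\]
because the subtracted term lies strictly between $0$ and $1/3$ while $|D|/3$ has fractional part in $\{0,1/3,2/3\}$. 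So \cref{thm: join} (applied with $a=k$, $b=2k+1$, using $\chi_{\ell flex}(C_{2k+1})=3$) already gives $(4,1/\rho)$-flexibility, with no new colouring argument and no case analysis.

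By missing this, you commit yourself to redoing the two-colouring argument of \cref{thm: join} with the sharper constant $k/(2k+1)$ specific to the cycle, and then to a separate finite analysis for $k=2,3$; neither is actually carried out. The decisive inequality is only asserted to ``should'' hold, and the small cases are described rather than proved (including the genuinely delicate point of keeping a colour free at the hub after colouring the rim). Your approach is not wrong in principle, but as written the proposal is an outline with its hardest steps missing, for a statement the paper derives in a few lines from results you already had in hand.
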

\begin{proof}
Let $G=C_{2k+1}$. It is well known that $\alpha(G)=k$, $\rho(G) = (2k+1)/k$, and $\chi_{\ell flex}(G) = 3$ (see e.g.,~\cite{KM23}). Let $M = K_1 \vee G$. By \cref{thm: join} $\epsilon_{\ell}(M,4) \geq k/(3k+1)$. Now we will show $\rho(M) =3$.

For each $i\in [\alpha(G)]$, let $a_i = w(i,G)$. Note that $a_1 = 2$ and $a_k = 2k+1$. Since $\rho(G) = (2k+1)/k$, \cref{pro: Basic} implies $a_k/k \geq a_j/j$ for each $j \in [\alpha(G)]$. By \cref{thm: aTheorem}, for each $i\in [\alpha(G)]$, $w(i,M) = 1+a_i$. Then by \cref{pro: Basic}, $\rho(M) = \max\{(1+a_i)/i:i\in[\alpha(G)]\}$. When $j$ satisfies $2 \leq j\leq k$, $1/j +1/k \leq 1$ which implies $$3 \geq \frac{2k+1}{k} +\frac{1}{j} = \frac{a_k}{k} + \frac{1}{j} \geq \frac{a_j+1}{j}. $$
Thus, $\rho(M) = \max\{(1+a_i)/i : i \in [\alpha(G)]\} = a_1 + 1 = 3$. 

We are now ready to prove the desired result. Suppose $L$ is an arbitrary $4$-assignment for $M$ and $r$ is a request of $L$ with domain $D$. Since $\epsilon_{\ell}(M,4) \geq k/(3k+1)$, there exists a proper $L$-coloring $f$ of $M$ satisfying at least $\lceil k|D|/(3k+1) \rceil$ of the vertex requests of $r$. Clearly, $0 < |D|\leq |V(M)| < 3k+1$ which means $0 < |D|/(3(3k+1)) < 1/3$. Thus, $$\left\lceil\frac{k}{3k+1}|D|\right\rceil = \left\lceil\frac{1}{3}|D|- \frac{1}{3(3k+1)}|D|\right\rceil = \left\lceil \frac{1}{3}|D|\right\rceil.$$  So, $f$ satisfies at least $\lceil |D|/3\rceil$ of the vertex requests of $r$.  Thus, $M$ is $(4, 1/\rho(M))$-flexible, and we have $4 = \chi(M) \leq \chi_{\ell flex}(M)$. Consequently, $\chi_{\ell flex}(M) = 4$.  
\end{proof}

It is worth mentioning that one can use \cref{cor: hallin} and \cref{pro: oddcycles} to give an easy proof by induction that $\chi_{\ell flex}(K_p \vee C_{2k+1}) = p+3 = \chi(K_p \vee C_{2k+1})$ whenever $p, k \in \N$ (when $k=1$ this result immediately follows from the fact that $\chi_{\ell flex}(K_n) = n$, see~\cite{KM23}). 

\section[Answering Question BIGDADDY]{Answering \cref{ques: BIGDADDY}} \label{answer}

In this section we show that the answer to \cref{ques: BIGDADDY} is no and characterize the graphs for which the answer is yes by proving \cref{thm: main}.  Before proving \cref{thm: main}, we need several technical lemmas.  Our first lemma demonstrates that the Hall ratio of a complete graph and any graph $G$ is the clique number of the join provided that the order of the complete graph is large enough.

\begin{lem} \label{pro: clique}
Suppose $G$ is a graph with two nonadjacent vertices (i.e., with $\alpha(G) \geq 2$).  For each $i\in [\alpha(G)]$, let $a_i = w(i,G)$.  Let $d_j = \lceil (a_j-ja_1)/(j-1) \rceil$ for each $j \in \{2, \dots, \alpha(G)\}$. If $N \geq \max \{1, d_2, \dots, d_{\alpha(G)} \}$, then $\omega(K_N \vee G) = \rho(K_N\vee G)$.
\end{lem}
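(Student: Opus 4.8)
The plan is to reduce the claim to one elementary arithmetic inequality about the $w$-function of $G$, using the structural results stated earlier. Set $M = K_N \vee G$. First I would record that $\alpha(M) = \alpha(G)$: any independent set of $M$ that meets $V(K_N)$ is a single vertex (a vertex of $K_N$ is adjacent to everything else in $M$), and $\alpha(G) \ge 2 \ge 1$. Next, since $\alpha(K_N) = 1 \le \alpha(G)$, I would apply \cref{thm: aTheorem} with $H = K_N$ to obtain $w(i, M) = N + a_i$ for every $i \in [\alpha(G)]$; the ``$a \in [\alpha(H)]$'' and ``$\alpha(H) \le a \le \alpha(G)$'' cases of \cref{thm: aTheorem} coincide at $i = 1$ because $w(1,K_N) = \omega(K_N) = |V(K_N)| = N$. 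Then \cref{pro: Basic} gives $\omega(M) = w(1,M) = N + a_1$ and $\rho(M) = \max_{i \in [\alpha(G)]} (N + a_i)/i$. Because $\omega(M) \le \rho(M)$ always holds, the whole statement reduces to proving that $(N + a_i)/i \le N + a_1$ for every $i \in [\alpha(G)]$.

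The $i = 1$ case is an equality, so the remaining work is in the cases $j \in \{2, \dots, \alpha(G)\}$. Here I would just clear denominators: $(N + a_j)/j \le N + a_1 \iff a_j - j a_1 \le (j-1)N \iff N \ge (a_j - j a_1)/(j-1)$, and since $N$ is a positive integer this last condition is equivalent to $N \ge \lceil (a_j - j a_1)/(j-1)\rceil = d_j$. The hypothesis $N \ge \max\{1, d_2, \dots, d_{\alpha(G)}\}$ supplies exactly this for every such $j$, so $\rho(M) \le N + a_1 = \omega(M)$, and the two quantities are equal.

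I do not anticipate a serious obstacle; once \cref{thm: aTheorem} is invoked, the argument is a short computation. The points requiring care are: checking the hypothesis $\alpha(K_N) \le \alpha(G)$ before applying \cref{thm: aTheorem} (this is the one place the assumption $\alpha(G) \ge 2$ is used, although $\alpha(G) \ge 1$ would already suffice); verifying $\alpha(M) = \alpha(G)$ so that the maximum in \cref{pro: Basic} for $M$ genuinely ranges over $[\alpha(G)]$; and observing that the passage from the real inequality $N \ge (a_j - j a_1)/(j-1)$ to the integer condition $N \ge d_j$ is precisely the definition of $d_j$ as a ceiling. The hypothesis $N \ge 1$ is needed only to ensure that $K_N$, and hence the join $K_N \vee G$, is a well-defined (nonempty) graph.
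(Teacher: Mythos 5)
Your proposal is correct and follows essentially the same route as the paper's proof: apply \cref{thm: aTheorem} with $H=K_N$ to get $w(i,M)=N+a_i$, use \cref{pro: Basic} to express $\rho(M)$ as $\max_i (N+a_i)/i$, and then verify $(N+a_j)/j \le N+a_1$ from $N \ge d_j$ by clearing denominators. The extra care you take in checking $\alpha(M)=\alpha(G)$ and the hypothesis of \cref{thm: aTheorem} is fine but does not change the argument.
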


\begin{proof}
\cref{pro: Basic} implies $\rho(G) = \max\{a_i/i : i \in [\alpha(G)]\}$.  Let $M = K_N \vee G$, and note $\alpha(M)=\alpha(G)$. By \cref{thm: aTheorem}, for each $i\in [\alpha(G)]$, $w(i,M) = N+a_i$. Then, by \cref{pro: Basic}, $\rho(M) = \max\{(N+a_i)/i:i\in[\alpha(G)]\}$. Suppose $i\in \{2, \dots, \alpha(G)\}$.  We have $N \geq \lceil (a_i-ia_1)/(i-1) \rceil \geq (a_i-ia_1)/(i-1)$ which means $N+a_1 \geq (N+a_i)/i$. Thus, $\rho(M) = N+a_1 = \omega(M)$. 
\end{proof}

Note that if $G$ is a graph with $\alpha(G)=1$, then $\omega(K_N \vee G) = \rho(K_N \vee G)$ whenever $N \in \N$.  Using the notation of \cref{pro: clique}, from this point forward, for any graph $G$, we let $\tau_G = \max \{1, d_2, \dots, d_{\alpha(G)} \}$ when $G$ is a graph with two nonadjacent vertices, and we let $\tau_G = 1$ otherwise.

 Our next lemma is well-known and involves list assignments for a graph $G$ that may assign lists of various sizes to the vertices of $G$.  If $f: V(G) \rightarrow \N$, we say that $G$ is \emph{$f$-choosable} if $G$ is $L$-colorable whenever $L$ is a list assignment for $G$ satisfying $|L(v)|=f(v)$ for each $v \in V(G)$.  If $f: V(G) \rightarrow \N$ and $g: V(G) \rightarrow \N$ satisfy $g(v) \geq f(v)$ for each $v \in V(G)$, it is immediately clear that $G$ is $f$-choosable only if $G$ is $g$-choosable.  We can now state the well-known ``Small Pot Lemma".

 \begin{lem} (Small Pot Lemma~\cite{CR12}) \label{lem: smallPot}
Suppose that $G$ is a graph and $f: V(G) \rightarrow \mathbb{N}$ is a function satisfying $f(v) < |V(G)|$ for all $v \in V(G)$.  If $G$ is not $f$-choosable, then there is an $f$-assignment $L$ for $G$ satisfying $|\bigcup_{v \in V(G)} L(v)| < |V(G)|$ such that there is no proper $L$-coloring of $G$.
\end{lem}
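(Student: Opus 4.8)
The plan is to argue by contradiction using a minimal counterexample. Suppose $G$ is not $f$-choosable with $f(v) < n := |V(G)|$ for every $v$; since $G$ is not $f$-choosable, the collection of list assignments $L$ with $|L(v)| = f(v)$ for all $v$ and no proper $L$-coloring is nonempty, and we choose such an $L$ with the pot $\mathcal{C} := \bigcup_{v \in V(G)} L(v)$ of minimum size. It suffices to show $|\mathcal{C}| < n$, so assume for contradiction that $|\mathcal{C}| \ge n$. Note that $|L(v)| = f(v) \le n - 1 < |\mathcal{C}|$, so every list misses a color of the pot.

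The idea is to shrink the pot and contradict minimality. Fix a color $c^* \in \mathcal{C}$ and let $V^* = \{v : c^* \in L(v)\}$; for each $v \in V^*$ pick $\phi(v) \in (\mathcal{C} \setminus \{c^*\}) \setminus L(v)$ (possible since $|L(v) \cap (\mathcal{C}\setminus\{c^*\})| = f(v)-1 < |\mathcal{C}|-1$), and define $L^*(v) = L(v)$ for $v \notin V^*$ and $L^*(v) = (L(v) \setminus \{c^*\}) \cup \{\phi(v)\}$ for $v \in V^*$. Then $L^*$ is again an $f$-assignment whose pot lies in $\mathcal{C} \setminus \{c^*\}$, hence is strictly smaller; by minimality $L^*$ admits a proper coloring $g$. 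Since $c^*$ lies in no list of $L^*$, $g$ never uses the color $c^*$.

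Next I want to lift $g$ to a proper $L$-coloring. Let $X = \{v \in V^* : g(v) = \phi(v)\}$. For $v \notin X$ one checks $g(v) \in L(v)$, so if $X = \emptyset$ then $g$ is a proper $L$-coloring, contradicting that $L$ is bad. If $X \ne \emptyset$, recolor every vertex of $X$ with $c^*$ (legal since $c^* \in L(v)$ for $v \in X \subseteq V^*$); because $g$ uses no $c^*$, the only possibly monochromatic edges lie inside $X$, so this produces a proper $L$-coloring of $G$ as long as $X$ is independent — again a contradiction.

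The main obstacle is therefore to guarantee that the cheating set $X$ can be taken independent. The plan is to use the remaining freedom: choose $c^*$, the padding $\phi$, and then $g$ among proper $L^*$-colorings so as to minimize a parameter of $X$ (first $|X|$, then $e(G[X])$), and show a minimal such $X$ is independent. If $v_1v_2$ were an edge inside a minimal $X$, both vertices are "stuck" cheating, which forces most of $L(v_1)$ and $L(v_2)$ to be blocked by neighboring colors; one then either recolors one of $v_1, v_2$ within its list (reducing $|X|$) or re-selects its padding color (reducing $e(G[X])$ without increasing $|X|$), contradicting minimality. Organizing this exchange argument, together with the boundary case $|\mathcal{C}| = n$ and $f(v) = n-1$ where padding colors are forced, is the only non-routine part. (An alternative route is to start from a proper partial $L$-coloring with the maximum number of colored vertices and run an augmenting/Kempe-chain recoloring toward a color unused by it; the same independence/termination point reappears there as the crux.)
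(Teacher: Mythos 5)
You should first note that the paper does not actually prove \cref{lem: smallPot}; it is quoted from the literature with a citation, so there is no in-paper argument to measure yours against, and your proposal has to stand on its own. Its setup is fine: taking a bad $f$-assignment $L$ of minimum pot size, padding each list containing $c^*$ with a color $\phi(v)\notin L(v)$, invoking minimality to obtain a proper $L^*$-coloring $g$, and observing that $g$ lifts to a proper $L$-coloring provided the ``cheating set'' $X=\{v\in V^*: g(v)=\phi(v)\}$ is independent. But the independence of $X$ is exactly where the entire content of the lemma lives, and you do not establish it --- you acknowledge as much and only sketch an exchange argument. That sketch does not hold up. The move ``recolor $v_1\in X$ within its list'' can be blocked outright (all $f(v_1)-1$ colors of $L(v_1)\setminus\{c^*\}$ may appear on neighbors of $v_1$), and the fallback ``re-select its padding color, reducing $e(G[X])$ without increasing $|X|$'' does not do what you claim: after replacing $\phi(v_1)$ by $\phi'(v_1)$ the old color of $v_1$ is no longer in $L^*(v_1)$, so $v_1$ must be recolored, in the stuck case necessarily to $\phi'(v_1)$; then $v_1$ remains in $X$, and $e(G[X])$ --- which depends only on the vertex set $X$ --- is unchanged. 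In the boundary case $|\mathcal{C}|=n$ and $f(v_1)=n-1$ there is not even an alternative padding color. No decreasing potential is exhibited, so nothing forces the process to terminate with $X$ independent.

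The missing idea is that this lemma is not proved by local recoloring but by a global matching argument. Form the bipartite graph $B$ between $V(G)$ and the pot $P$ with $v$ adjacent to $c$ iff $c\in L(v)$. A matching saturating $V(G)$ would give each vertex a distinct color from its own list, i.e.\ a proper $L$-coloring; so a maximum matching $M$ misses a nonempty set $U\subseteq V(G)$. Let $T$ be the set of vertices of $V(G)$ reachable from $U$ by $M$-alternating paths. Then $N_B(T)=\bigcup_{v\in T}L(v)$ is matched into $T$, so $|N_B(T)|=|T|-|U|\le n-1$, and every $v\notin T$ is matched to a color $M(v)\notin N_B(T)$. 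Hence any proper $L|_T$-coloring of $G[T]$ extends to a proper $L$-coloring by assigning $M(v)$ to each $v\notin T$, so badness of $L$ forces $G[T]$ to be uncolorable from $L|_T$, whose pot already has fewer than $n$ colors; replacing the lists outside $T$ by arbitrary $f(v)$-subsets of a fixed $(n-1)$-set containing $N_B(T)$ yields the desired bad $f$-assignment with small pot. I would recommend replacing your exchange sketch with an argument of this type (or simply deferring to the cited source), since the local approach leaves the crux unproved.
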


The context in which we apply the Small Pot Lemma first requires a straightforward observation.

\begin{obs}\label{lem: smallerPot}
Suppose $q,n,a \in \N$, $a \leq n$, and $U$ is a set with $|U| = n$. Suppose $A_1, \dots, A_q$ are subsets of $U$ each of size $a$. Then $$\left|{\bigcap}_{i=1}^{q} A_i \right| \geq n - (n-a)q .$$
\end{obs}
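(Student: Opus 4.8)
The statement to prove is Observation~\ref{lem: smallerPot}: for subsets $A_1, \dots, A_q$ of a size-$n$ set $U$, each of size $a \le n$, we have $|\bigcap_{i=1}^q A_i| \ge n - (n-a)q$.

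The plan is to prove this by a straightforward inclusion–exclusion / complementation argument. First I would pass to complements inside $U$: writing $B_i = U \setminus A_i$, each $B_i$ has size $|U| - |A_i| = n - a$, and by De Morgan's law $U \setminus \bigcap_{i=1}^q A_i = \bigcup_{i=1}^q B_i$. The crux is then the union bound: $\bigl|\bigcup_{i=1}^q B_i\bigr| \le \sum_{i=1}^q |B_i| = (n-a)q$. Therefore $\bigl|\bigcap_{i=1}^q A_i\bigr| = n - \bigl|\bigcup_{i=1}^q B_i\bigr| \ge n - (n-a)q$, which is exactly the claimed inequality.

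An equivalent route, if one prefers to avoid phrasing things via complements, is a direct induction on $q$. The base case $q = 1$ is immediate since $|A_1| = a = n - (n-a)$. For the inductive step, using $|X \cap Y| \ge |X| + |Y| - n$ for any two subsets $X, Y$ of $U$ (this is just $|X \cap Y| = |X| + |Y| - |X \cup Y| \ge |X| + |Y| - n$), one sets $X = \bigcap_{i=1}^{q-1} A_i$ and $Y = A_q$, applies the inductive hypothesis $|X| \ge n - (n-a)(q-1)$, and concludes $|X \cap Y| \ge n - (n-a)(q-1) + a - n = n - (n-a)q$.

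There is essentially no obstacle here: the statement is elementary set-theoretic counting, and the only thing worth a moment's care is noting that the conclusion is vacuous (but still true) when $(n-a)q \ge n$, since the right-hand side is then non-positive while a cardinality is always non-negative. I would present the complementation-plus-union-bound version as it is the shortest and cleanest, and remark in one line on the trivial case.
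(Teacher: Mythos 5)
Your argument is correct, and it is exactly the standard complementation-plus-union-bound computation that the paper has in mind (the paper states this as an unproved ``straightforward observation''). Nothing further is needed.
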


We now use \cref{lem: smallPot} and \cref{lem: smallerPot} to prove the last lemma we need for our proof of \cref{thm: main}.

\begin{lem}\label{lem: smallestPot}
Suppose $G$ is a $t$-vertex graph and $\chi(G) = k$. Suppose $S \subseteq V(G)$ is nonempty and there is a proper $k$-coloring of $G$ that colors all the vertices in $S$ the same. Suppose $C'$ is a color class containing $S$ in such a proper $k$-coloring of $G$. Let $C = C' - S$, $|S|=s$, and $H = K_N \vee (G-S)$ for some $N \in \N$. 
\\
(i)  If $C = \emptyset$ and $N \geq \max\{ 1,t-2k-s+1\} $, then $H$ is $(N+k-1)$-choosable.
\\
(ii) If $C \neq \emptyset$, $N \geq \max \{1, (t-(k+s))(t-(s+|C|+1)) \}$, and $f: V(H) \rightarrow \N$ is given by 
\begin{equation}
f(v) = \begin{cases}
N+k & \text{if } v \in C \\
N+k-1 & \text{otherwise},
\notag
\end{cases}
\end{equation}
then $H$ is $f$-choosable. 
\end{lem}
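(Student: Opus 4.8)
The plan is to prove the two parts by different routes: part (i) will fall out of the Noel--Reed--Wu theorem (\cref{thm: Ohba}), while part (ii) will be an application of the Small Pot Lemma (\cref{lem: smallPot}) followed by a structured greedy coloring. For part (i): since $C=C'-S=\emptyset$ and $S\subseteq C'$ we have $S=C'$, so $G-S$ is properly colored by the $k-1$ remaining (nonempty) color classes of the given $k$-coloring of $G$, whence $\chi(G-S)\le k-1$; on the other hand $G[S]=G[C']$ is edgeless, so $k=\chi(G)\le\chi(G-S)+1$, and therefore $\chi(G-S)=k-1$ exactly. Consequently $\chi(H)=N+k-1$ and $|V(H)|=N+t-s$, and the hypothesis $N\ge t-2k-s+1$ is precisely the inequality $|V(H)|\le 2\chi(H)+1$; so \cref{thm: Ohba} gives $\chi_\ell(H)=\chi(H)=N+k-1$, which is exactly $(N+k-1)$-choosability. (If $G-S$ happens to be empty then $H=K_N$ is trivially $(N+k-1)$-choosable.)

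For part (ii) I would first dispose of the degenerate range $t-s\le k$: there every list has size at least $|V(H)|-1$, so $H$ is $f$-choosable unless $H=K_{|V(H)|}$, in which case a direct application of Hall's theorem (exploiting that the single vertex of $C$ carries the one longer list) finishes it. So assume $t-s\ge k+1$ and, for contradiction, that $H$ is not $f$-choosable. Since $f(v)<|V(H)|$ for all $v$, the Small Pot Lemma produces an $f$-assignment $L$ with $|P|\le|V(H)|-1=N+t-s-1$ (where $P=\bigcup_v L(v)$) that has no proper $L$-coloring. Write $u_1,\dots,u_N$ for the vertices of $K_N$ and $E_1=C,E_2,\dots,E_k$ for the color classes of $G-S$ induced by the given $k$-coloring of $G$; the sets $E_2,\dots,E_k$ are nonempty, they properly color $(G-S)-C$, and, since $C$ is independent, every neighbor in $G-S$ of a vertex of $C$ lies in $E_2\cup\cdots\cup E_k$.

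The crux is to find distinct colors $\gamma_2,\dots,\gamma_k$ with $\gamma_i\in B_i:=\bigcap_{v\in E_i}L(v)$. Each $v\in E_i$ has $|L(v)|=N+k-1$, so \cref{lem: smallerPot} gives $|B_i|\ge|P|-\bigl(|P|-(N+k-1)\bigr)|E_i|$. Because the $k-1$ nonempty classes $E_2,\dots,E_k$ partition the $t-s-|C|$ vertices of $(G-S)-C$ we have $|E_i|-1\le t-s-|C|-(k-1)\le t-(s+|C|+1)$, and also $|P|-(N+k-1)\le t-(k+s)$; feeding these two inequalities into the bound on $|B_i|$ and using $N\ge(t-(k+s))(t-(s+|C|+1))$, a one-line computation yields $|B_i|\ge k-1$ for every $i\in\{2,\dots,k\}$, so Hall's condition holds for $\{B_i\}_{i=2}^{k}$ (a union of some of these $k-1$ sets has size at least $k-1$) and the representatives $\gamma_2,\dots,\gamma_k$ exist. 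One then colors $H$ as follows: put color $\gamma_i$ on all of $E_i$ for $i=2,\dots,k$; then color $u_1,\dots,u_N$ in turn, giving $u_j$ a color of $L(u_j)$ avoiding $\{\gamma_2,\dots,\gamma_k\}$ and the colors already used on $u_1,\dots,u_{j-1}$, which is possible since $|L(u_j)|=N+k-1>(k-1)+(j-1)$ for $j\le N$; finally color each $v\in C$ with a color of $L(v)$ avoiding the at most $N+(k-1)<N+k=|L(v)|$ colors on its neighbors, which is unproblematic because $C$ is independent. This proper $L$-coloring contradicts the choice of $L$, so $H$ is $f$-choosable.

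The step I expect to be the main obstacle is the crux of part (ii): extracting $|B_i|\ge k-1$ from the (quadratic in $t$) lower bound on $N$ via \cref{lem: smallerPot}, in particular pinning down the two inequalities $|E_i|-1\le t-(s+|C|+1)$ and $|P|-(N+k-1)\le t-(k+s)$ so that the hypothesis on $N$ is exactly what is needed. Everything after the representatives are fixed is forced, the two greedy phases using only the off-by-one slack that is exactly tight at $u_N$ and on $C$; and the one thing one must not overlook is carefully separating off the degenerate cases $t-s\le k$ (where the Small Pot Lemma does not apply) and $k=1$ (where there are no classes $E_2,\dots,E_k$ and the system-of-representatives step is vacuous).
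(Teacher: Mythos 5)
Your proposal is correct and follows essentially the same route as the paper: part (i) via \cref{thm: Ohba} after computing $\chi(H)=N+k-1$ and $|V(H)|=N+t-s$, and part (ii) via \cref{lem: smallPot} and \cref{lem: smallerPot} to extract $k-1$ "safe" colors for $V(G-S)-C$, followed by the same three-phase greedy coloring that saves $C$ for last (the paper likewise disposes of $k=1$ and $|V(H)|=N+k$ up front). The only cosmetic difference is that you take per-color-class intersections and an SDR via Hall's condition, whereas the paper intersects the lists over all of $V(G-S)-C$ at once and reuses a fixed $(k-1)$-coloring of $G-C'$; both yield the identical threshold $N\geq(t-(k+s))(t-(s+|C|+1))$.
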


\begin{proof}
First, we prove Statement~(i). Note that Statement~(i) is clear when $k=1$; so, we may suppose $k \geq 2$.  Since $C = \emptyset$, $\chi(G-S) = k-1$. Also, $\chi(H) = N+k-1$ and $|V(H)|=N+t-s$. Since $N \geq t-2k - s + 1$, \cref{thm: Ohba} implies $H$ is $(N+k-1)$-choosable.

Now, we prove Statement~(ii).  First, we note that $k \geq 1$, and $|V(H)| \geq N+k$ since deleting the vertices in $S$ from $G$ leaves at least one vertex from each color class of a proper $k$-coloring of $G$.  Notice that if $k=1$ or $|V(H)|=N+k$, then one can greedily construct a proper $L$-coloring of $H$ whenever $L$ is an $f$-assignment for $H$ (by coloring the element(s) in $C$ last).  So, we may assume $|V(H)| > N+k$ and $k > 1$. 

For the sake of contradiction, suppose $H$ is not $f$-choosable. By \cref{lem: smallPot}, there is an $f$-assignment $L$ for $H$ satisfying $ |\bigcup_{v \in V(H)} L(v)| < |V(H)|=N+t-s$ such that there is no proper $L$-coloring of $H$. So, we may suppose $L(v) \subseteq [N+t-s-1]$ for each $v \in V(H)$.  Suppose $V(G-S)-C = \{u_1, \ldots, u_q \}$.  We let $r = |C|$ so that $q = t- r-s$.  Note that for each $j \in [q]$, $|L(u_j)| = N + k - 1$. By \cref{lem: smallerPot}, 
\begin{align*}
\left|{\bigcap}_{i=1}^{q} L(u_i) \right| &\geq N+t-s-1 - (t-s-k)(t-s-r) \\
& \geq (t-(k+s))(t-(s+r+1))- (t-(k+s))(t-(s+r)) + t - s - 1 =k-1. 
\end{align*}
Without loss of generality, we may assume that $[k-1] \subseteq {\bigcap}_{i=1}^{q} L(u_i)$. 

Let $L'(v) = L(v) - [k-1]$ for each $v \in W$ where $W$ is the vertex set of the copy of $K_N$ used to form $H$. Since $|L'(v)| \geq N$ for each $v \in W$, we can greedily construct a proper $L'$-coloring $g$ of $H[W]$. Color the elements of $W$ according to $g$. Now let $L''(v) = L(v) - g(W)$ for each $v \in V(G-S)$. Notice that for all $v \in (V(G-S)-C)$, $[k-1] \subseteq L''(v)$, and for all $v \in C$, $|L''(v)| \geq k$. Since $\chi(G-C')=k-1$, there is a proper $(k-1)$-coloring $h:V(G-C') \rightarrow [k-1]$ of $G-C'$.  Color the elements in $V(G-C')$ according to $h$. To complete a proper $L''$-coloring of $G-S$, color each $v \in C$ with a color that is in $L''(v)$ and isn't in $[k-1]$ (this maintains properness since $C$ is an independent set in $H$).  Thus, there exists a proper $L''$-coloring of $G-S$ which can be used to complete a proper $L$-coloring of $H$.  This however is a contradiction.
\end{proof}

We are now ready to prove \cref{thm: main} which we restate.

\begin{customthm} {\bf \ref{thm: main}}
For graph $G$ there is an $N \in \N$ such that $\chi_{\ell flex}(K_p \vee G) = \chi(K_p \vee G)$ whenever $p \geq N$ if and only if for every $S \subseteq V(G)$ with $|S| = \omega(G) + 1$ there exist $u, v \in S$ such that there is a proper $\chi(G)$-coloring of $G$ that colors $u$ and $v$ the same.
\end{customthm}

\begin{proof}
Suppose $G$ is a graph such that for every $S \subseteq V(G)$ with $|S| = \omega(G) + 1$, there exist $u, v \in S$ such that there is a proper $\chi(G)$-coloring of $G$ that colors $u$ and $v$ the same.  Let $k = \chi(G)$ and $t = |V(G)|$. Suppose $N = \max \{2, \tau_G, (t-k-1)(t-3)+1, t - \omega(G)\}$ and $p \geq N$. Let $H = K_p \vee G$. Suppose the vertex set of the copy of $K_p$ used to form $H$ is $W = \{ w_1, \dots, w_p\}$.  Since $\chi(H) = k+p$, we must show $H$ is $(p+k, 1/ \rho(H))$-flexible.

Suppose $L$ is an arbitrary $(p+k)$-assignment for $H$ and $r$ is a request of $L$ with nonempty domain $D$. Since $p \geq \tau_G, \rho(H) = \omega(H) = \omega(G) + p$.  We must show that there is a proper $L$-coloring of $H$ that satisfies at least $|D|/\rho(H)$ of the vertex requests of $r$. We will demonstrate this in each of two possible cases: (1) $ |D| \leq \omega(G) + p$ and (2) $\omega(G) + p + 1 \leq |D| \leq |V(H)| = t + p$.  Note that if $\omega(G)=t$, then one need only consider (1).

In case (1), we will show that there is a proper $L$-coloring of $H$ satisfying at least $\lceil |D|/\rho(H) \rceil = 1$ vertex request of $r$. If there exists a $w \in D \cap W$, color $w$ with $r(w)$. For each $v \in V(H-w)$, let $L'(v) = L(v) - \{ r(w)\}$. Clearly, $|L'(v)| \geq p+k-1$. Note that $H-w = K_{p-1} \vee G$ and $\chi(H-w) = p + k - 1$. Since $p \geq t - \omega(G) \geq t -2k$, $H-w$ is chromatic-choosable by \cref{thm: Ohba}. Thus, there exists a proper $L'$-coloring of $H-w$ which can be use to complete a proper $L$-coloring of $H$ that colors $w$ with $r(w)$.

If $D$ and $W$ are disjoint, there is a $u \in V(G) \cap D$.  Color $u$ with $r(u)$. Let $C'$ be the color class containing $u$ in a proper $k$-coloring of $G$, and let $C = C' - \{u\}$. Then, for each $v \in (V(H-u)-C)$, let $L'(v) = L(v) - \{r(u)\}$, and for each $v \in C$, let $L'(v) = L(v)$. Clearly, $|L'(v)| \geq p+k-1$ for each $v \in V(H-u)$.  Moreover, $|L'(v)| = p+k$ for each $v \in C$. Note that if $C = \emptyset$, $p \geq \max\{2, t -\omega(G) \} \geq \max\{1,t-2k\}$, and \cref{lem: smallestPot}~(i) applies. Notice also that if $C \neq \emptyset$, then $p \geq \max\{2, (t-k-1)(t-3)+1\} \geq \max \{1, (t-k-1)(t-2-|C|) \}$, and  \cref{lem: smallestPot}~(ii) applies. Thus, in either case, there exists a proper $L'$-coloring of $H-u$ which can be used to complete a proper $L$-coloring of $H$ that colors $u$ with $r(u)$.

In case (2), since $p \geq t-\omega(G) \geq t-2\omega(G)$, we have $|V(H)| = p+t \leq 2(p+\omega(G)) = 2\rho(H)$. So, we must show there is a proper $L$-coloring of $H$ satisfying at least $\lceil |D|/\rho(H) \rceil = 2$ vertex requests of $r$. If there are $w_i,w_j \in W \cap D$ such that $r(w_i) \neq r(w_j)$, color $w_i$ with $r(w_i)$ and $w_j$ with $r(w_j)$.  For each $v \in V(H-\{w_i,w_j\})$, let $L'(v) = L(v) - \{r(w_i), r(w_j)\}$. Clearly, $|L'(v)| \geq p + k - 2$.  Note that $H-\{w_i,w_j\}=K_{p-2} \vee G$ and $\chi(H-\{w_i,w_j\})=p-2+k$. Since $p \geq t- \omega(G) \geq t-2k+1$, $H-\{w_i,w_j\}$ is chromatic-choosable by \cref{thm: Ohba}.  Thus, there exists a proper $L'$-coloring of $H-\{w_i,w_j\}$ which can be used to complete a proper $L$-coloring of $H$ that satisfies at least two vertex requests of $r$.

Now, we may assume $r$ is constant on $W \cap D$. Since $p \geq t-\omega(G)$, we know $|D| \geq p+\omega(G)+1 > t$ which means $W \cap D \neq \emptyset$. Suppose $w \in W \cap D$. Now, suppose that there is a $u \in V(G)$ such that $r(u) \neq r(w)$. Color $w$ with $r(w)$ and $u$ with $r(u)$. Note that $H-\{w,u\} = K_{p-1} \vee (G-u)$. Let $C'$ be the color class containing $u$ in a proper $k$-coloring of $G$, and let $C = C'-\{u\}$. For each $v \in (V(H-\{w,u\})-C)$, let $L'(v) = L(v) - \{r(w),r(u)\}$, and for each $v \in C$, let $L'(v) = L(v)-\{r(w)\}$. Clearly, $|L'(v)| \geq p+k-2$ for each $v \in V(H-\{w,u\})$.  Moreover, $|L'(v)| \geq p+k-1$ for each $v \in C$. Note that if $C = \emptyset$, $p-1 \geq \max\{1,t-\omega(G)-1\} \geq \max\{1,t-2k\}$, and \cref{lem: smallestPot}~(i) applies. Notice also that if $C \neq \emptyset$, $p-1 \geq \max\{1,(t-k-1)(t-3)\} \geq \max\{1, (t-k-1)(t-2-|C|)\}$, and \cref{lem: smallestPot}~(ii) applies. So, there exists a proper $L'$-coloring of $H-\{w,u\}$ which can be used to complete a proper $L$-coloring of $H$ that satisfies at least two vertex requests of $r$. 

If there is no $u \in V(G)$ with $r(u) \neq r(w)$, then $r$ is constant.  So, we may assume $r$ is constant and its range is $\{z\}$. Since $|D \cap V(G)| \geq \omega(G) + 1$, there must exist $v_i,v_j \in D \cap V(G)$ that are colored the same in some proper $k$-coloring $f$ of $G$. Color $v_i$ and $v_j$ with $z$. Note that $H-\{v_i,v_j\}=K_p \vee (G- \{v_i,v_j\})$. Let $C'$ be the color class of $f$ containing $v_i$ and $v_j$, and let $C = C' - \{v_i,v_j\}$. For each $v \in (V(H-\{v_i, v_j\})-C)$, let $L'(v) = L(v) - \{z\}$, and for each $v \in C$, let $L'(v) = L(v)$. Clearly, $|L'(v)| \geq p+k-1$ for all $v \in V(H-\{v_i,v_j\})$. Moreover, $|L'(v)| = p+k$ whenever $v \in C$. Note that if $C = \emptyset$, $p \geq \max\{2,t-\omega(G)\} \geq \max\{1,t-2k-1\}$, and \cref{lem: smallestPot}~(i) applies. Notice also that if $C \neq \emptyset$, then $p \geq \max \{2,(t-k-1)(t-3)+1\}\geq \max\{1, (t-k-2)(t-3-|C|) \}$, and \cref{lem: smallestPot}~(ii) applies. So, there exists a proper $L'$-coloring of $H-\{v_i,v_j\}$ which can be used to complete a proper $L$-coloring of $H$ that satisfies at least two vertex requests of $r$. Having completed cases (1) and (2), we may conclude $\chi_{\ell flex}(K_p \vee G) =k+p= \chi(K_p \vee G)$.

Conversely, suppose $G$  is a graph such that there is an $S \subseteq V(G)$ where $|S| = \omega(G) + 1$ and no proper $\chi(G)$-coloring of $G$ colors two vertices of $S$ the same. Let $k = \chi(G)$. Suppose $p \geq \tau_G$, and $H = K_p \vee G$. Notice that since $p \geq \tau_G, \rho(H) = \omega(H) = \omega(G) + p$. We claim $\chi_{\ell flex}(H) > k+p = \chi(H)$. 

Suppose the vertex set of the copy of $K_p$ used to form $H$ is $W$. Let $L$ be the $(p+k)$-assignment for $H$ given by $L(v) = [p+k]$ for each $v \in V(H)$. Suppose $r : W \cup S \rightarrow [p+k]$ is the request of $L$ given by $r(v) = 1$. To prove the desired claim, we will show that there is no proper $L$-coloring of $H$ that satisfies at least $|W \cup S|/\rho(H)$ of the vertex requests of $r$.

For the sake of contradiction, suppose $f$ is a proper $L$-coloring of $H$ that satisfies at least $\lceil |W \cup S|/\rho(H) \rceil = \lceil (p+\omega(G) + 1)/(\omega(G) + p) \rceil = 2$ vertex requests of $r$. Note $f$ can't color any vertex in $W$ with 1 for if $f$ colors some vertex in $W$ with $1$, then $|f^{-1}(1)|=1$ which contradicts the fact that $f$ satisfies two vertex requests of $r$.

Now, let $A = f(W)$.  Notice $1 \notin A$, $|A| = p$, and $|[p+k]-A| = k$. Next, let $g$ be $f$ with domain restricted to $V(G)$. Clearly, $g$ is a proper $k$-coloring of $G$ that uses the colors in the set $([p+k]-A)$.  Since $f$ satisfies at least $2$ vertex requests of $r$ and colors no vertices in $W$ with $1$, $g$ is a proper $k$-coloring of $G$ that colors two vertices of $S$ with $1$. This is a contradiction. Having produced a request $r$ of $L$ for which there is no proper $L$-coloring satisfying an appropriate number of vertex requests of $r$, $\chi_{\ell flex}(H) > k+p$. 
\end{proof}

{\bf Acknowledgment.} The authors would like to thank Hemanshu Kaul for his helpful comments.  This project was completed by the Coloring Research Group of South Alabama at the University of South Alabama during the spring 2025, summer 2025, and fall 2025 semesters.  The support of the University of South Alabama is gratefully acknowledged.

	\bibliographystyle{hplain}
\bibliography{bibliography}{}
\vspace{-5mm}

\end{document}